\newtheoremstyle{mytheoremstyle} 
{\topsep}                    
{\topsep}                    
{\itshape}                   
{1.5em}                           
{\itshape\bf}                   
{.---}                          
{.50em}                       
{}  
\theoremstyle{mytheoremstyle}
\newtheorem{para}{}[subsection]
\newtheorem{thm}[para]{Theorem}
\newtheorem{prop}[para]{Proposition}
\newtheorem{lemma}[para]{Lemma}
\newtheorem{cor}[para]{Corollary}
\theoremstyle{definition}
\newtheorem{defn}{Definition}[section]
\numberwithin{equation}{section}
\newcommand{\scrD}{\mathscr{D}}
\newcommand{\scrR}{\mathscr{R}}
\newcommand{\calD}{\mathcal{D}}
\newcommand{\calO}{\mathcal{O}}
\newcommand{\frakf}{\mathfrak{f}}
\newcommand{\frakg}{\mathfrak{g}}
\newcommand{\frakh}{\mathfrak{h}}
\newcommand{\frakk}{\mathfrak{k}}
\newcommand{\frakl}{\mathfrak{l}}
\newcommand{\frakp}{\mathfrak{p}}
\newcommand{\fraks}{\mathfrak{s}}
\newcommand{\frakt}{\mathfrak{t}}
\newcommand{\fraku}{\mathfrak{u}}
\newcommand{\frakS}{\mathfrak{S}}
\newcommand{\bfw}{{\bf w}}
\newcommand{\CC}{\mathbb{C}}
\newcommand{\HH}{\mathbb{H}}
\newcommand{\QQ}{\mathbb{Q}}
\newcommand{\RR}{\mathbb{R}}
\newcommand{\ZZ}{\mathbb{Z}}
\renewcommand{\ker}{\operatorname{ker}}
\newcommand{\so}{\Longrightarrow}
\newcommand{\isomto}{\xrightarrow{\raisebox{-5pt}{$\sim$}}}
\newcommand{\rar}{\rightarrow}
\newcommand{\lra}{\longrightarrow}
\newcommand{\hra}{\hookrightarrow}
\newcommand{\wt}{\widetilde}
\newcommand{\ol}{\overline}
\newcommand{\sgn}{\mathrm{sgn}}
\newcommand{\End}{\mathrm{End}}
\newcommand{\Hom}{\mathrm{Hom}}
\newcommand{\Lie}{\mathrm{Lie}}
\newcommand{\ad}{\mathrm{ad}}
\newcommand{\Tr}{\operatorname{Tr}}
\newcommand{\id}{\mathrm{id}}
\newcommand{\Sp}{\operatorname{Sp}}
\newcommand{\Ind}{\mathrm{Ind}}
\newcommand{\Rep}{\mathrm{Rep}}
\newcommand{\rank}{\mathrm{rank}}
\newcommand{\Sym}{\mathrm{Sym}}
\newcommand{\op}{\mathrm{op}}
\renewcommand{\ker}{\operatorname{ker}}
\newcommand{\GL}{{\mathrm{GL}}}
\newcommand{\SU}{{\mathrm{SU}}}
\newcommand{\cleq}{\preccurlyeq}
\newcommand{\Alt}{\mathrm{Alt}}
\newcommand{\mtwo}[1]{\left(\begin{array}{cc} #1 \end{array}\right)}
\newcommand{\mone}[1]{\left(\begin{array}{c} #1 \end{array}\right)}
\renewcommand{\mod}{\text{mod }}
\newcommand{\rN}{\mathrm{N}}
\newlength{\dhatheight}
\newcommand{\qi}{\hat{\iota}}
\newcommand{\rT}{\mathrm{T}}
\newcommand{\qj}{\hat{\jmath}}
\newcommand{\qk}{\hat{\scriptstyle k}}
\newcommand{\su}{\fraks \fraku}
\newcommand{\slf}{\fraks \frakl} 
\newcommand{\pard}[2]{{\frac{\partial{#1}}{\partial{#2}}}}
\title{Fueter-Regular Discete Series for $\Sp(1,1)$}
\author{Z. Amir-Khosravi}
\begin{document}
\maketitle
\begin{abstract}We show that the quaternionic discrete series on $G=\Sp(1,1)$ with minimal $K$-type of dimension $n+1$ can be realized inside the space of Fueter-regular functions on the quaternionic ball $B\subset \HH$, with values in $\HH^n$. We then consider the corresponding $\HH^n$-valued Fueter-regular automorphic forms on $G$. For a fixed level $\Gamma$, we construct a non-trivial map from the space of pairs of such automorphic forms, to closed $M_n(\HH)$-valued differential $3$-forms on $B$, which transform under $\Gamma$ according to a cocycle condition.  
\end{abstract}

\setcounter{tocdepth}{2}

\tableofcontents

\section{Introduction}

Real semisimple Lie groups that have compact Cartan subgroups admit representations in the discrete series, by the work of Harish-Chandra \cite{HC65,HC66}. If the symmetric space of the group is hermitian, among the discrete series are the  holomorphic ones, which can be realized inside spaces of holomorphic functions. This fact plays a fundamental role in linking the theory of holomorphic automorphic forms to complex and algebraic geometry.

W. Schmid in his thesis \cite{Sch89} showed how to construct discrete series in general, as long as the corresponding parameter is sufficiently nonsingular, as the kernel of a certain Dirac operator $\scrD$ acting on vector-valued functions on the group. When the symmetric space is hermitian, $\scrD$ may coincide with the Cauchy-Riemman operator $\ol{\partial}$, in which case one recovers the holomorphic discrete series.

If the maximal compact subgroup has an $\SU(2)$ factor, the symmetric space is a quaternionic-K\"ahler manifold, and the group admits \textit{quaternionic discrete series}. These were classified by B. Gross and N. Wallach \cite{GW96}, who showed they can be realized inside the first cohomology group of a invertible sheaf on the associated twistor space. 

More recently, H. Liu and G. Zhang \cite{LiuZhang} showed that the quaternionic discrete series on $\Sp(1,n)$ can be constructed inside the space of functions on the quaternionic $n$-ball, with values in complex vector spaces, that are annihilated by a relatively simple Dirac operator.

In the 1930s, R. Fueter and his students had studied a quaternionic analogue of holomorphic functions \cite{Fuet36}. A function of a quaternionic variable $q=t+x\qi + y\qj + z\qk$ is called \textit{Fueter regular}, if it is annihilated by the (left-sided) Fueter operator
$$ \ol{\partial}_l = \pard{}{t} + \qi \pard{}{x} + \qj \pard{}{y} + \qk \pard{}{z}.$$
Fueter regular functions have similar properties as holomorphic functions. They are harmonic, coincide with certain convergent series, and satisfy analogues of theorems of Cauchy and Liouville, among others, from standard complex analysis. The Dirac operator of Liu-Zhang coincides with the complex form of the Fueter operator in the \textit{limit of discrete series} case.

In this paper we first show that for the group $\Sp(1,1)$, whose symmetric space is the quaternionic ball $B$, \textit{all} the quaternionic discrete series can be realized inside spaces of $\HH^n$-valued Fueter-regular functions on $B$. We do this by applying a transform to the space of functions on which the Liu-Zhang operator acts.

We then consider Fueter-regular automorphic forms on a rational form of $\Sp(1,1)$, corresponding to vectors in the quaternionic discrete series fixed by a discrete subgroup $\Gamma$. To a pair of such automorphic forms we associate a closed differential $3$-form on the quaternionic ball, with values in a quaternionic vector space, that transform under $\Gamma$ by an automorphy factor. 

Now we describe the results in further detail. We fix an isomorphism between $\SU(2)$ and the norm-one quaternions $\HH^1$. Let $R_n: \GL_2(\CC) \rar \GL(V_n)$ denote the representation of $\HH^\times$ on homogeneous polynomials of degree $n$ in $\CC[X,Y]$, by considering $\HH^\times \subset \GL_2(\CC)$. We consider $\HH^{n}$ as a space of row vectors, and define the action of $G$ on $C^\infty(B,\HH^{n})$ by
\begin{align}\label{GL2rep}(\gamma\cdot f) =  \frac{(cq+d)^{-1}}{|cq+d|^{2}} f(\gamma^{-1}\cdot q)\cdot  R_{n-1}((cq+d)^{-1}),\end{align}
where $\gamma^{-1} = \mtwo{a & b \\ c & d}.$

The $L^2$ norm on $C^\infty(B,\HH^n)$ is defined by 
$$ \|f\|_2 = \left( \int_{B} |f(q)|^2 (1-\rN q)^{n-2} dv \right)^{\frac{1}{2}},\ \ \ dv = dt \wedge dx \wedge dy \wedge dz.$$ 

\textbf{Theorem} \textit{The quaternionic discrete series of $\Sp(1,1)$ with minimal $K$-type $\mathbbm{1}\boxtimes V_n$ occurs in the subspace $C^\infty(B,\HH^n)$ consisting of Fueter-regular functions of finite $L^2$-norm.}

Here a Fueter-regular function $f: B \rar \HH^n$ means one that satisfies $\ol{\partial}_l f = 0$, i.e. such that each individual coordinate function is Fueter-regular. See Theorem \ref{thm2} with a more precise description.

This results allows us to identify the quaternionic analogue of a holomorphic modular form. Fixing a rational form for $G$, and an arithmetic subgroup $\Gamma\subset G(\QQ)$, we say a function $f: B\rar \HH^{n}$ is a \textit{Fueter regular automorphic form}, of weight $n$ and level $\Gamma$, if:
\begin{itemize}
	\item[(1)] $f(\gamma\cdot q) = |cq+d|^2 (cq+d) f(q) \cdot  R_{n-1}(cq+d)$, for all $\gamma=\gamma(a,b,c,d)\in \Gamma$,  
	\item[(2)] $f$ is Fueter-regular,
	\item[(3)] $|f(q)|$ has moderate growth as $|q| \rar 1$.
\end{itemize}

Let $M_n(\Gamma)$, denote the space of Fueter regular automorphic forms of weight $n$ and level $\Gamma$ as above. For $f,g\in M_n(\Gamma)$, define
$$ \omega_{f,g} = g^* Dq f,$$
where $g^*(q) = {}^t\ol{g(\ol{q})}$, and $$ Dq = dx \wedge dy \wedge dz -\qi dt \wedge dy \wedge dz - \qj dt \wedge dx \wedge dz - \qk dt \wedge dx \wedge dy.$$

\textbf{Proposition} \textit{The map $(f,g) \mapsto \omega_{f,g}$ induces one from $M_{k}(\Gamma)\otimes_{\RR} M_k(\Gamma)$ to the space of closed differential $3$-forms $\omega$ on $B$ which satisfy
	$$ \gamma^*(\omega) = R_{n-1}(a+b\ol{q})^*\cdot \omega \cdot R_{n-1}(cq+d),$$
for $\gamma\in \Gamma$.} 

If $\omega_{f,g}=0$, then either $f=0$ or $g=0$, so the map above is non-trivial. See Proposition \ref{formal} in the text for two other differentials associated to $f$ and $g$, in degrees one and two. The fact that the form $\omega_{f,g}$ is \textit{closed} follows from the Fueter regularity of $f$ and $g$.
\section*{Notation}

Throughout, we will write the standard quaternions as
$$ \HH = \{t+ x \qi + y\qj + z \qk : t,x,y,z\in \RR\},$$
where 
$$ \qi^2 = \qj^2 = \qk^2 =\qi\qj\qk=-1.$$
The letters $i,j,k$ are saved for use as indices.  Occasionally it will be 
convenient to write $e_0,e_1,e_2,e_3$ for 
$1,\qi,\qj,\qk$, in that order. The standard involution on $\HH$ is 
\begin{align}q \mapsto \ol{q},\ \ \ q= t + x\qi + y\qj + z\qk,\ \ \ \ol{q}=t - x\qi - y\qj - z\qk.\end{align}The reduced 
norm and trace of $q\in \HH$ are, respectively,
\begin{align} \rN q = q\ol{q},\ \ \ \rT q =q+\ol{q}.\end{align}
For $g = (a_{ij})\in M_n(\HH)$ we write 
\begin{align}g^* = {}^t \ol{g} = (\ol{a_{ji}}).\end{align}
The unit quaternions are denoted by $\HH^1$, and the trace-zero quaternions by $\HH_0$. 

We fix a distinguished embedding $\CC \hra \HH,\ \  a + bi \mapsto a + b\qi$ and identify $\CC$ with its image. Then each $q\in \HH$ is of the form $z+w\qj$ for $z,w\in \CC$. Then standard embedding of $\HH$ into $M_2(\CC)$ is the $\RR$-algebra homomorphism\begin{align}\label{iota}\iota: \HH \hra M_2(\CC),\ \ \ 
\iota(z+w\qj)=\mtwo{z & w \\ -\ol{w} & \ol{z}},\ z,w\in \CC,
\end{align}
so that in particular
\begin{align}\label{emb}\iota(\qi)=\mtwo{i & \\ & -i},\ \ \ 
\iota(\qj)=\mtwo{& 1 \\ -1 & },\ \ \ 
\iota(\qk)=\mtwo{& i \\ i&}.
\end{align} 
The image of $\iota$
is the subalgebra of matrices $A\in M_2(\CC)$ such 
that
$$ A^* A = \det(A) \cdot I.$$
Since
$$ \iota(\ol{q}) = \iota(q)^*,$$
it follows that $\iota$ maps the reduced norm and trace on $\HH$ to the determinant and matrix trace in $M_2(\CC)$, respectively. The map $\iota$ gives an isomorphism of $\HH^1\isomto \SU(2)$.  

All tensor products are taken over $\CC$ if not specified. 

To denote elements of $M_2(\HH)$ without taking up space, we write
$$ \gamma(a,b,c,d) = \mtwo{a & b \\ c & d}.$$
\section{Quaternionic Discrete Series}

\subsection{Gross-Wallach classification}
Let $G$ be a real simple Lie group, with maximal compact $K$, and assume $\rank(G)=\rank(K)$. Then by the work of Harish-Chandra $G$ admits representations in the discrete series. If $\pi$ is such a representation, its restriction to $K$ splits up as
$$ \pi|_{K} = \sum_{\lambda \cleq \mu} m(\mu)\cdot V_{\mu},$$
with $V_{\mu}$ an irreducible representation of $K$, occurring with multiplicity $m(\mu)$. The \textit{minimal $K$-type} of $\pi$ is the equivalence class of the constituent $V_{\lambda}$ with the least highest weight. The representation $\pi$ is determined up to equivalence by its minimal $K$-type.

Assume futhermore that $K$ contains a normal subgroup isomorphic to $\SU(2)$. It follows that in fact $K\simeq \SU(2)\times L$. The following definition is due to Gross-Wallach \cite{GW96}.

\begin{defn}Suppose that $G$ admits discrete series representations, and that $K=\SU(2)\times L$. A discrete series representation $\pi$ of $G$ is called \textit{quaternionic} if its minimal $K$-type $\rho: K \rar \GL(V)$ factors through the projection $K\rar \SU(2)$.
\end{defn}

In other words, the quaternionic discrete series are those whose minimal $K$-types have the form $V\boxtimes \mathbbm{1}$, where $V\in \Rep(\SU(2))$. Let $V_n$ denote the unique irreducible representation of $\SU(2)$ of dimension $n+1$. Gross and Wallach prove that every simple complex Lie group $G_\CC$ has a real form $G$ that admits quaternionic discrete series. Furthermore, for each such $G$ there's a positive integer $d$, easily computable from its root datum, such that the minimal $K$-types of the quaternionic discrete series of $G$ are exactly $V_n\boxtimes \mathbbm{1}$ for $n\geq d$. For example $d=2$ for $G=\Sp(1,1)$. 

We will first recall some standard facts about representations of $\SU(2)$, and fix a model for the representations $V_n$. We then study explicit realizations of the discrete series for $\Sp(1,1)$ using Fueter-regular functions.

\subsection{Representations of $\SU(2)$}

The real Lie group
$$ \SU(2) = \left\{ 
\mtwo{ z & w \\ 
	-\ol{w} & 
	\ol{z}}: z,w\in \CC,\ |z|^2+|w|^2=1\right\}\subset \GL_2(\CC),$$
has Lie algebra
$$ \fraks \fraku(2) = \left\{ \mtwo{ 
	ir & -\ol{u} \\ u & -ir}: u\in \CC,\ r\in \RR\right\}\subset M_2(\CC).$$
The injective map of $\RR$-algebras $\iota: \HH \rar M_2(\CC)$ from (\ref{iota}) restricts to isomorphisms 
$$\HH^1 \isomto \SU(2),\ \ \ \HH_0\isomto \su(2),$$ of Lie groups and Lie algebras, respectively, where the Lie bracket on $\HH_0$ is the commutator of multiplication in $\HH$:
$$ [\qi,\qj] = 2\qk,\ \ \ [\qj,\qk] = 2\qi,\ \ \ [\qk,\qi] = 2\qj.$$
Identifying $\End(\HH_0)$ with $M_3(\RR)$ using the basis $\{\qi,\qj,\qk\}$, the adjoint representation of $\HH_0$ may be expressed as
$$ \ad:\HH_0 \rar M_3(\RR),\ \ \ \ad(a \qi + b\qj + c\qk)=\left(\begin{array}{ccc}  & -2c &  2b\\ 2c& & -2a\\ -2b &2a & 
\end{array}\right).$$
It then follows that, with respect to the basis $\{\qi,\qj,\qk\}$, the matrix of the Killing form on $\HH_0$ is $-8I_3$.

The $\RR$-algebra homomorphism $\iota: \HH \hra M_2(\CC)$ induces an isomorphism $$\iota_\CC: \HH_\CC \isomto M_2(\CC)$$ 
of $\CC$-algebras, which identifies the complexification $\HH_{0,\CC}$ of $\HH_0$ with $\slf_2(\CC)$. The vectors
$$ e = \frac{1}{2}(\qj - i\qk),\ \ \ f = \frac{1}{2}(-\qj-i\qk),\ \ \ h=-i\qi$$
in $\HH_{0,\CC}$ are mapped onto the standard $\slf_2$-triple
$$ X = \mtwo{& 1 \\ & },\ \ Y = \mtwo{&\\ 1 &},\ \ H=\mtwo{1 & \\ & -1}.$$
respectively. The subspace $\frakt_0=\RR h\subset \HH_0$ is then a Cartan subalgebra, with complexification $\frakt =\CC h \subset \HH_{0,\CC}$. We define $\alpha\in\frakt^*$ by
$$ \alpha: \frakt \rar \CC,\ \ \alpha(z\qi)=iz.$$
Then $2\alpha$, $-2\alpha$ are the roots of $\HH_{0,\CC}$, with root vectors $e$, $f$, respectively and root-space decomposition
$$ \HH_{0,\CC} = \CC h \oplus \CC e \oplus \CC f.$$

We choose $2\alpha$ to be the positive simple root. As usual, we let $\frakt_\RR = i \frakt_0 \subset \frakt$, and consider $h\in \frakt_\RR$, $\alpha\in \frakt_\RR^*$. The Killing form restricted to $\frakt_\RR$ induces an isomorphism $\frakt_{\RR}^*\simeq \frakt_\RR$ which maps $\alpha$ to $h_{\alpha} = \frac{1}{8}h \in \frakt_{\RR}$. The fundamental weight associated with $2\alpha\in \frakt_{\RR}^*$ is $h_{\alpha}\in \frakt_\RR=(\frakt_\RR^*)^*$. The weight lattice of $\su(2)$ is then identified with $\ZZ\cdot h_{\alpha} \subset \frakt_{\RR}$. The dominant integral weights are 
$$ \Lambda^+ = \{h_{\alpha}, 2h_{\alpha}, \cdots \}\subset \frakt_{\RR}.$$
They are in bijection with the irreducible representations of $\SU(2)$ via the theorem of highest weight.

The group $\GL_2(\CC)$ acts on polynomials $f(X,Y)\in \CC[X,Y]$ via linear substitutions:
$$ g=\mtwo{a & b \\ c & d},\ \ \ g\cdot f = f(aX+bY,cX+dY).$$
For each $n\geq 0$, let $V_n\subset \CC[X,Y]$ denote the subspace of homogeneous polynomials of degree $n$. The action of $\SU(2)$ on $V_n$ as a subgroup of $\GL_2(\CC)$ makes $V_n$ an irreducible representation, of dimension $n+1$, and highest weight $nh_{\alpha}$, with highest weight vector $X^n$.

The representation $V_1$ constructed above coincides with the \textit{standard representation} of $\SU(2)\subset \GL_2(\CC)$ acting on $\CC^2$. On the other hand, identifying a complex number $x+iy$ with $x+\qi y\in \HH$, the quaternions $\HH$ become a complex vector space via right-multiplication by $\CC$, and a complex representation of $\HH^1$ via left-multiplication. Thus $\SU(2)$ acts on $\HH$ via $\iota: \HH^1 \isomto \SU(2)$. There is an equivalence of representations
\begin{align}\label{V1H}V_1 \rar \HH,\ \ \ aX + bY\mapsto a - b\qj,\end{align}
identifying $\HH$ with the standard representation of $\SU(2)$.

For 
$$J=\mtwo{& -1 \\ 1 &},$$
and $g\in \GL_2(\CC)$, we have 
$$ Jg J^{-1} = \det(g) \cdot {}^t g^{-1}.$$
Thus the standard representation $V_1$ of $\GL_2(\CC)$ is isomorphic to $\det\otimes V_1^*$. It follows that, since $\det|_{\SU(2)}=1$, the standard representation of $\SU(2)$ is self-dual.  If $X^*$, $Y^*\in V_1^*$ denote the dual basis of $X$, $Y\in V_1$, there is an explicit isomorphism of representations
\begin{align}\label{V1dual} V_1^* \rar V_1,\ \  aX^* + bY^* \mapsto bX - aY.\end{align}
Note that, since $V_n = \Sym^{n}(V_1)$, all representations of $\SU(2)$ are self-dual.

Multiplication of polynomials induces a surjective $\SU(2)$-equivariant map 
\begin{align}\label{Vmn} V_m \otimes V_n \rar V_{m+n},\ \ \ f\otimes g \mapsto fg.\end{align}
When $m=1$ and $n\geq 1$, the kernel of this map is isomorphic to 
$V_{n-1}$, so that $V_1\otimes V_{n} \simeq V_{n-1}\oplus V_{n+1}.$ Since $V_1$ is self-dual, we then have $V_1^*\otimes V_n \simeq V_{n-1}\oplus V_{n+1}$. We shall make this isomorphism explicit.

Let
\begin{align}\partial_X,\ \partial_Y:\CC[X,Y] \rar \CC[X,Y] \end{align}
denote differentiation with respect to $X$ and $Y$, respectively. Then we have $\partial_X, \partial_Y: V_k \rar V_{k-1}$ for each $k\geq 1$. Identifying $V_0$ with $\CC$, the maps $\partial_X,\ \partial_Y: V_1 \rar \CC$ coincide with $X^*, Y^*\in V_1^*$ dual to $X$ and $Y$. For $f\in \CC[X,Y]$, we write
\begin{align} f_X = \partial_X f,\ \ \ f_Y = \partial_Y f.\end{align}
Then we have $\SU(2)$-equivariant maps
\begin{align} 
&P^+: V_1^*\otimes V_k \rar V_{k+1},\ \ \ (a \partial_X + b \partial_Y) \otimes f \mapsto (bX-aY)f, & &  \\
\label{Pmin} &P^-: V_1^*\otimes V_k \rar V_{k-1},\ \ \ (a \partial_X + b \partial_Y )\otimes f \mapsto a f_X + b f_Y.& & \end{align}
The map $P^+$ is the composition of (\ref{Vmn}) for $(m,n)=(1,k)$ with $V_1^*\isomto V_1$ as in (\ref{V1dual}). The map
\begin{align} P^+\oplus P^- : V_1^*\otimes V_k \lra V_{k+1} \oplus V_{k-1}\end{align}
is an isomorphism of $\SU(2)$-representations

\section{Regular Functions on the Quaternionic Ball}

\subsection{The Structure of $\Sp(1,1)$}

\subsubsection{Subgroups and Lie subalgebras} 
Let 
\begin{align}G&= \left\{\mtwo {a & b \\ c & d}\in M_2(\HH) : \rN a = \rN c + 1,\ \ 
\rN b = \rN 
d -1,\ \ \ol{a}b=\ol{c}d\right\},\\
\label{Sp11K} K &= \left\{\mtwo{a & \\ & d}: a,d\in \HH^1\right\} \simeq \HH^1\times \HH^1,\\
H &= \left\{\mtwo{u & \\ & u^{-1}}: u\in \HH^1\right\}
\simeq\HH^1.\end{align}

The defining condition for $G$ can also be written for $g=g(a,b,c,d)\in M_2(\HH)$ as
$$g \in G\iff\mtwo{ a & b \\ c & d }\left(\begin{array}{rr}
\ol{a} & -\ol{c} \\ -\ol{b} & \ol{d}\end{array}\right)= \mtwo{1 & \\   & 1}.$$

$G=\Sp(1,1)$ is a non-quasi-split inner form of $\Sp(4)$, the subgroup $K\subset G$ is maximal compact, and $H\subset K$ is a compact Cartan subgroup of $G$ contained in $K$. In particular, $\rank(K)=\rank(G)$, and $G$ admits representations in the discrete series. Since $K=\HH^1\times \HH^1$, $G$ admits quaternionic discrete series in two families, corresponding to each $\HH^1$ factor. We mainly choose the second $\HH^1$ factor, and consider quaternionic discrete series with minimal $K$-types of the form $\mathbbm{1}\boxtimes V_n$. The minimal $K$-types that do occur are exactly those with $n\geq 2$.

The Lie algebras of $G$, $K$, and $H$ are, respectively,
\begin{align} \frakg_0 &= \Lie(G) = \left\{ \mtwo{ a & b \\ \ol{b} & d }: 
a,d\in \HH_0, b\in \HH\right\}.\\\frakk_0 &=\Lie(K)= \left\{\mtwo{ a & \\ & d}: a,b\in \HH,\ \Tr 
a = \Tr d = 
0\right\},\\
\frakh_0 &= \Lie(H)=\left\{ \mtwo { r\qi & \\ & s\qi }: r,s\in \RR \right\}.
\end{align}
Then $\frakh_0\subset \frakk_0$ is a Cartan subalgebra of $\frakg_0$. 
Let $S_1$ be the ordered set of $\frakk_0$ given by 
\begin{align} \mtwo{\qi & \\ & }, \mtwo{\qj & \\ & }, \mtwo{\qk & \\ & }, \mtwo{ & \\ & \qi }, \mtwo{ & \\ & \qj }, \mtwo{ & \\ & \qk}\end{align}
and $S_2$ the set
$$ \mtwo{ & 1\\ 1& }, \mtwo{ & \qi\\ -\qi & }, \mtwo{ & \qj \\-\qj & }, \mtwo{ &\qk \\ -\qk& }.$$
The matrix of the Killing form on $\frakg_0$ with respect to $S = S_1\cup S_2$ is 
$$ \mtwo{-12 I_6 & \\ & 24 I_4}.$$
In particular, the Killing form induces an orthogonal decomposition $\frakg_0 = \frakk_0 \oplus \frakp_0$, where
\begin{align}\label{p0}\frakp_0 = \left\{ \mtwo{& b \\ \ol{b} &}: b\in \HH\right\}
\end{align}
has orthogonal basis $S_2$. The adjoint representation of $\frakg_0$ restricts to an action of $\frakk_0$ on $\frakp_0$.

Let $\frakg$, $\frakk$, $\frakh$, $\frakp$ denote the complexifications of $\frakg_0$, $\frakk_0$, $\frakh_0$, $\frakp_0$, respectively. Using the notation
$$ \HH_\CC =\{q=t + x\qi + y\qj + z\qk: t,x,y,z\in \CC\},\ \ \ 
\ol{q} := t - x\qi - y \qj - z\qk,$$
we have
$$ \frakg = \left\{ \mtwo {a & b \\ \ol{b} & d}: 
a,b,c,d\in \HH_\CC,\ \ a+\ol{a}=d+\ol{d}=0\right\}.$$
Define $\alpha,\beta\in \frakh^*$ by
\begin{align}\alpha\left(\mtwo {z \qi &\\ & w\qi}\right) = iz,\ \ \ \beta\left(\mtwo {z \qi &\\ & w\qi}\right) = iw
\end{align}
The root system of $(\frakg,\frakh)$ is then
\begin{align} \Phi = \{ \pm 2\alpha, \pm 2\beta, \pm \alpha \pm \beta\}.\end{align}

We put 
$$\frakh_\RR = i \frakh_0 \subset \frakh, \ \ \ \frakh_\RR^* = 
\Hom(\frakh_\RR,\RR).$$
and note that $\frakh_\RR^*=i\frakh_0^*$ canonically. Since $\alpha$, $\beta$ take real values on $\frakh_\RR$, we may consider $\Phi\subset \frakh_\RR^*$. The $\CC$-bilinear extension of the Killing form to $\frakh$ restricts to a real-valued inner product on $\frakh_\RR$. Letting
\begin{align} H_\alpha = \frac{-1}{12}\mtwo{i\qi & \\ & },\ \ \ H_\beta = \frac{-1}{12}\mtwo{& \\ & i\qi},\end{align}
and
$$H_{\mu}=r H_{\alpha} + s H_{\beta},\ \ \ \text{ for }\mu = r\alpha + s \beta,\ r,s\in \ZZ,$$
we have $H_{\mu}\in \frakh_\RR$, and if $\mu\in \Phi$, then 
$$ B(H_{\mu},X) = \mu(X),\ \ \ X\in \frakh.$$

The root spaces $\frakg_{\mu}$ are contained in $\frakk$ for the compact roots
\begin{align} \Phi_{\frakk} = \{\pm 2\alpha , \pm 2\beta\},\end{align}
and spanned by root vectors
\begin{align}
\begin{split}
E_{2\alpha} = \frac{1}{\sqrt{-24}}\mtwo{\qj - i\qk & \\&  },\ E_{-2\alpha}=\frac{1}{\sqrt{-24}}\mtwo{\qj +i\qk & \\ 
	& },\\\ E_{2\beta} = \frac{1}{\sqrt{-24}}\mtwo{ & \\ & \qj-i\qk},\ E_{-2\beta} = \frac{1}{\sqrt{-24}}\mtwo{ & \\ & 
	\qj+i\qk}.
\end{split}
\end{align}
The non-compact roots
\begin{align}\Phi_{\frakp} = 
\{ \pm \alpha \pm \beta\},\end{align}
correspond to root spaces spanned by
\begin{align} 
\begin{split}E_{\alpha-\beta} = \frac{-1}{\sqrt{-48}}\mtwo{ & 1-i\qi \\ 1+i\qi & },\ E_{\alpha+\beta} = 
\frac{-1}{\sqrt{-48}}\mtwo{& \qj-i\qk\\ -\qj+i\qk},\\
E_{-\alpha+\beta}=\frac{1}{\sqrt{-48}}\mtwo{& 1+i\qi \\ 1-i\qi&}, E_{-\alpha-\beta}=\frac{1}{\sqrt{-48}}\mtwo{& 
	\qj+i\qk\\ -\qj-i\qk&}.
\end{split}
\end{align}
The choices above are normalized so that
$$ B(E_{\mu},E_{-\nu}) = \delta_{\mu,\nu},\ \ \ [E_{\mu},E_{-\mu}]=H_{\mu},\ \ \text{ for }\mu,\nu \in \Phi.$$

Letting $\frakg_\mu = \CC E_\mu$ for each $\mu\in \Phi$, we have 
$$ \frakk = \frakh \oplus \bigoplus_{\mu\in \Phi_{\frakk}} \frakg_\mu,\ \ \ \frakp = 
\bigoplus_{\mu\in \Phi_{\frakp}} \frakg_{\mu}.$$

The lattice $\Lambda \subset \frakh_\RR^*$ is spanned by $\alpha$, $\beta$. We choose the positive simple roots to be
$$ \Delta = \{2\alpha,2\beta, \alpha+\beta, \alpha-\beta\},$$
and denote the positive compact and non-compact simple roots, respectively, by
$$ \Delta_{\frakk} = \{2\alpha, 2\beta\},\ \ \ \Delta_{\frakp} = \{\alpha+\beta,\alpha-\beta\}.$$

The weight lattice of $\frakk$ is $\ZZ h_\alpha + \ZZ h_\beta$, the sum of two weight lattices, one for each copy of $\SU(2)$. The dominant integral weights with respect to $\Delta_{\frakk}$ are then $mh_{\alpha} + nh_{\beta}$, $m,n\geq 0$, corresponding to the irreducible representation $V_{m}\boxtimes V_n\in \Rep(K)$.

\subsubsection{The symmetric space}

The group $G$ acts on $\HH$ via fractional linear transformations:
$$ \mtwo{a & b \\ c &d}\cdot q = (a q + b)(c q 
+ d)^{-1}.$$

\begin{lemma}\label{Ngamma}	For $g=\mtwo{a & b\\ c & d} \in \Sp(1,1)$, 
	$q\in \HH$, we 
	have $$ 1-\rN(g \cdot q) = \frac{1-\rN q}{\rN(cq + d)}.$$
	In paricular, $\rN q < 1$ if and only if $\rN(g\cdot q)<1$.
\end{lemma}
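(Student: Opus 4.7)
The plan is to compute both sides directly using the three defining conditions of $\Sp(1,1)$ (namely $\rN a = \rN c + 1$, $\rN d = \rN b + 1$, and $\bar a b = \bar c d$) together with the multiplicativity of the reduced norm $\rN$ on $\HH$ and the cyclicity of the reduced trace $\Tr$.

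First I would use that $\rN: \HH \to \RR$ is multiplicative (hence $\rN(x^{-1}) = \rN(x)^{-1}$) to rewrite
\begin{align*}
1 - \rN(g\cdot q) \;=\; 1 - \rN(aq+b)\cdot \rN(cq+d)^{-1} \;=\; \frac{\rN(cq+d) - \rN(aq+b)}{\rN(cq+d)}.
\end{align*}
Thus it suffices to show that the numerator equals $1 - \rN q$. Next I would expand $\rN(xq+y) = (xq+y)(\bar q\bar x + \bar y)$ for $(x,y) = (c,d)$ and $(x,y) = (a,b)$, yielding
\begin{align*}
\rN(xq+y) \;=\; \rN(x)\rN(q) + xq\bar y + y\bar q \bar x + \rN(y) \;=\; \rN(x)\rN(q) + \Tr(xq\bar y) + \rN(y),
\end{align*}
where I am using that $y\bar q\bar x = \overline{xq\bar y}$, so the two middle terms combine into the reduced trace.

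Subtracting, the defining conditions $\rN c - \rN a = -1$ and $\rN d - \rN b = 1$ immediately produce $\rN(q)(\rN c - \rN a) + (\rN d - \rN b) = 1 - \rN q$, so it remains to show the cross terms cancel, i.e.\ $\Tr(cq\bar d) = \Tr(aq\bar b)$. For this I would take the conjugate of the relation $\bar a b = \bar c d$ to obtain $\bar b a = \bar d c$, and then apply cyclicity of $\Tr$ (which holds because $\Tr(x) = 2\Re(x)$ and $\Re$ is cyclic):
\begin{align*}
\Tr(aq\bar b) \;=\; \Tr(\bar b a q) \;=\; \Tr(\bar d c q) \;=\; \Tr(cq\bar d).
\end{align*}
Combining these pieces gives $\rN(cq+d) - \rN(aq+b) = 1 - \rN q$ and hence the claimed identity; the final ``in particular'' clause follows at once since $\rN(cq+d) > 0$ (one checks easily that $cq+d \neq 0$ for $g \in \Sp(1,1)$, $q \in \HH$). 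The only real obstacle is the bookkeeping for the cross-term cancellation; once one recognizes that the symplectic relation $\bar a b = \bar c d$ is precisely what is needed to make $\Tr(aq\bar b) = \Tr(cq\bar d)$ via trace cyclicity, the rest is mechanical.
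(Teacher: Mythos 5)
Your proof is correct and follows essentially the same route as the paper's: expand both norms, use the norm relations $\rN a = \rN c + 1$, $\rN d = \rN b + 1$ for the diagonal terms, and derive $\rT(aq\ol{b}) = \rT(cq\ol{d})$ from the conjugate of $\ol{a}b = \ol{c}d$ together with cyclicity of the reduced trace. Nothing to add.
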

\begin{proof}
	For $g = \mtwo{a& b \\ c & d}\in \Sp(1,1)$, 
	we have
	$$ \rN(cq+d) = \rN q \rN c + \rN d + 
	\rT(cq\ol{d}),\ \ \  \rN(aq+b) = \rN q \rN a + \rN b + \rT(aq\ol{b}).$$
	In general $\Tr(aq\ol{b}) = \Tr(\ol{b}aq)$ and 
	$\Tr(cq\ol{d})=\Tr(\ol{d}cq)$ by 
	the properties of trace. Since $\gamma\in \Sp(1,1)$, we also have
	$$ \ol{b}a=\ol{d}c,\ \ \ \rN c = \rN a - 1,\ \ \ \rN d = \rN b + 1.$$
	From the first relation we get $\Tr(\ol{b}aq) =\Tr(cq\ol{d})$. Using that 
	and the other two relations,
	$$ \rN (cq+d) - \rN (aq+b) = \rN q(\rN a - 1) +  \rN b + 1 - \rN q \rN a - 
	\rN 
	b = 1 -\rN q.$$
	The lemma then follows from 
	$$ N(\gamma \cdot q) = 	\frac{N(aq+b)}{N(cq+d)}.$$
\end{proof}
It follows that the action of $G$ stabilizes the quaternionic ball
\begin{align}B = \{q\in \HH : \rN q< 1\},\end{align}
and hence the map
\begin{align}\label{GB}G\rar B,\ \ \ g=\mtwo{a & b \\ c & 
	d} \mapsto g\cdot 0 = bd^{-1}\end{align}
is well-defined. In fact the map is surjective, with a right-inverse 
\begin{equation}\label{sigma}\sigma: B \rar G,\ \ \ \sigma(q) = 
\frac{1}{\sqrt{1-\rN q}} \mtwo{ 1 & q \\ \ol{q} & 1 }.
\end{equation}
Moreover, the stabilizer of $0\in B$ is the maximal compact subgroup $K$. Therefore (\ref{GB}) induces a bijection $G/K \rar B$, identifying $B$ with the symmetric space of $K$. We note that $\sigma$ satisfies
\begin{equation}\label{sigmarels} \sigma(q)^{-1} = 
\sigma(-q).\end{equation}

\subsubsection{Induced Representations}

For $V\in \Rep(K)$, we have the induced (smooth) representation
$$\Ind_{K}^G(V) = \{F: G \rar V\text{ smooth }: k\cdot F(g)=F(gk^{-1})\text{ for all }k\in K\}.$$

We fix the map $\sigma: B\rar G$ to be as in (\ref{sigma}), and correspondingly, we define
\begin{align} \jmath(\gamma,x): G\times B \rar K,\ \ \ \jmath(\gamma,x) = \sigma(\gamma\cdot x)^{-1} 
\gamma \sigma(x).\end{align}

\begin{lemma}For $\gamma\in G$ and $q\in B$ as above we have
	$$ \jmath(\gamma,q) = \frac{1}{|cq+d|}\mtwo{a+b\ol{q}& \\ & cq+d}.$$
\end{lemma}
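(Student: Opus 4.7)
The plan is to multiply out $\jmath(\gamma,q) = \sigma(\gamma\cdot q)^{-1}\gamma\sigma(q)$ directly. Abbreviating $R = a+b\ol{q}$, $P = aq+b$, $Q = c+d\ol{q}$, $S = cq+d$, one computes
\[ \gamma\sigma(q) = \frac{1}{\sqrt{1-\rN q}}\mtwo{R & P \\ Q & S}, \]
and $q' := \gamma\cdot q = PS^{-1}$. Lemma \ref{Ngamma} gives $\sqrt{1-\rN q'} = \sqrt{1-\rN q}/|S|$, so using (\ref{sigmarels}),
\[ \sigma(q')^{-1} = \sigma(-q') = \frac{|S|}{\sqrt{1-\rN q}}\mtwo{1 & -q' \\ -\ol{q'} & 1}. \]

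Before expanding everything, I would note that $\jmath(\gamma,q) \in K$ automatically: since $\sigma(x)\cdot 0 = x$ for all $x\in B$ and $K$ is the stabilizer of $0$, the element $\sigma(q')^{-1}\gamma\sigma(q)$ sends $0\mapsto q \mapsto q' \mapsto 0$. This forces block-diagonal form, so only the top-left entry $\tfrac{|S|}{1-\rN q}(R - PS^{-1}Q)$ and the bottom-right entry $\tfrac{|S|}{1-\rN q}(S - \ol{S}^{-1}\rN P)$ need to be computed.

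The bookkeeping hinges on two identities that repackage the defining relations of $\Sp(1,1)$. First, $\ol{P}R = \ol{S}Q$: expanding both sides, the difference reduces to $[\rN a + \rN b - \rN c - \rN d]\ol{q} + \ol{q}(\ol{a}b - \ol{c}d)\ol{q} + (\ol{b}a - \ol{d}c)$, each piece of which vanishes using $\ol{a}b = \ol{c}d$, its conjugate $\ol{b}a = \ol{d}c$, and the norm relations $\rN a - \rN c = 1$, $\rN d - \rN b = 1$. Second, $\rN S - \rN P = 1-\rN q$: the cross-terms $\rT(aq\ol{b}) = \rT(\ol{b}aq)$ and $\rT(cq\ol{d}) = \rT(\ol{d}cq)$ in the two norms are equal thanks to $\ol{b}a = \ol{d}c$, so they cancel, and the remaining terms yield $(\rN c - \rN a)\rN q + (\rN d - \rN b) = 1 - \rN q$.

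The second identity immediately gives $S - \ol{S}^{-1}\rN P = \ol{S}^{-1}(\rN S - \rN P) = \ol{S}^{-1}(1-\rN q)$, so the bottom-right entry is $|S|\ol{S}^{-1} = S/|S|$. For the top-left, multiplying $\ol{P}R = \ol{S}Q$ on the left by $P$ and using $P\ol{P} = \rN P$ yields $\rN P \cdot R = P\ol{S}Q$; since $\ol{S}/\rN S = S^{-1}$, this rearranges to $PS^{-1}Q = R\,\rN P/\rN S$, so $R - PS^{-1}Q = R(\rN S - \rN P)/\rN S = R(1-\rN q)/\rN S$ and the top-left is $R/|S|$. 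The main obstacle is simply non-commutativity: real scalars like $\rN P$, $\rN S$, $1-\rN q$ may be moved freely, but the order of quaternionic factors in identities like $\ol{P}R = \ol{S}Q$ must be preserved at every step.
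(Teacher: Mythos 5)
Your proof is correct and follows essentially the same route as the paper: a direct expansion of $\sigma(\gamma\cdot q)^{-1}\gamma\sigma(q)$, using the defining relations of $\Sp(1,1)$ and Lemma \ref{Ngamma} to simplify the diagonal entries. Your identity $\ol{P}R=\ol{S}Q$ is just a rearrangement of the relation $c+d\ol{q}=\ol{\gamma\cdot q}\,(a+b\ol{q})$ that the paper derives from $q=\gamma^{-1}\cdot(\gamma\cdot q)$, and your stabilizer-of-$0$ argument is a clean justification of the paper's remark that the off-diagonal entries vanish automatically because $\jmath(\gamma,q)\in K$.
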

\begin{proof}Using the property $\sigma(q)^{-1} = \sigma(-q)$, we have
	\begin{align*} \jmath(\gamma,q) &= \sigma(-\gamma \cdot q) \gamma \sigma(q) 
	=\frac{1}{\sqrt{1-\rN q}\sqrt{1-\rN (\gamma \cdot q)}}\mtwo{1 & -\gamma\cdot q \\ 
		-\ol{\gamma\cdot q} & 1}\mtwo{a & b \\ c &d}\mtwo{1 & q \\ \ol{q}& 
		1}\\
	&=\frac{1} 
	{\sqrt{1-\rN q}\sqrt{1-\rN(\gamma \cdot q)}}\mtwo{a+b\ol{q} - (\gamma\cdot 
		q)(c+d\ol{q}) & aq+b - (\gamma\cdot q)(cq+d)\\ -\ol{\gamma\cdot 
			q}(a+b\ol{q}) + 
		c + d \ol{q}& -\ol{\gamma\cdot q} (aq+b) + cq+d}.
	\end{align*}
	The off-diagonals are automatically zero since $\jmath(q,\gamma)\in K$. Indeed, the 
	top-right is zero by definition. Using
	$$ q = \mtwo{\ol{a} & -\ol{c} \\ -\ol{b} & \ol{d}}\cdot (\gamma\cdot q) = 
	(\ol{a}(\gamma\cdot q) - \ol{c})(-\ol{b} (\gamma\cdot q) + \ol{d})^{-1}$$
	we obtain
	$$ -q \ol{b}(\gamma\cdot q) + q \ol{d} = \ol{a}(\gamma\cdot q) - \ol{c}\so 
	c + 
	d \ol{q} = \ol{\gamma\cdot q}(a + b\ol{q}),$$
	which implies the bottom left is also zero. Now using the same relations on the diagonal entries we obtain 
	$$ \jmath(\gamma,q) = \left(\frac{1-\rN(\gamma\cdot q)}{1-\rN 
		q}\right)^{1/2}\mtwo{ 
		a+  b\ol{q} & \\ & cq+d} = \frac{1}{|cq + d|} \mtwo{ 
		a+  b\ol{q} & \\ & cq+d},$$
	where the second equality follows from Lemma \ref{Ngamma}. 
\end{proof}	

Let $V\in \Rep(K)$ have dimension $n+1$, and let $C^\infty(B,V)$ denote the space of smooth maps $f: B \rar V$. We have an isomorphism of vector spaces
\begin{align}\label{trivia} \Ind_K^G(V) \rar C^\infty(B,V),\ \ \ F \mapsto f,\ \ f(q) = F(\sigma(q)) \cdot  | 1-\rN q |^{n+2}.\end{align}
We let $\gamma\in G$ act on $f\in C^\infty(B,V)$ by transport of structure, so that the above is upgraded to an isomorphism of representations.

Now let $V_n$ be the representation of $\HH^\times$ on $\CC[X,Y]_{n}$. Let $(\rho_{\tau},V_n), (\lambda_{\tau}, V_n)\in \Rep(K)$ be defined by
\begin{align}\label{rhotau}\rho_{\tau}(a,d) = \tau(d),\ \ \ \lambda_{\tau}(a,d) = \tau(a),\ \  a,d\in \HH^1.\end{align}
Let $C^\infty(B,\rho_{\tau})$, resp. $C^\infty(B,\lambda_{\tau})$, denote the action of $G$ on $C^\infty(B,V)$ obtained from $\Ind_{K}^G(\rho_{\tau},V)$, resp. $\Ind_{K}^G(\lambda_{\tau},V)$, via the map $(\ref{trivia})$. For $\gamma\in G$, if we write $\gamma^{-1} = \gamma(a,b,c,d)$, then $\gamma$ acts on $f\in C^\infty(B,\rho_{\tau})$ and $g\in C^\infty(B,\lambda_{\tau})$ by
\begin{align}\label{Gaction} (\gamma\cdot f)(q) &= |cq+d|^2\tau\left((cq+d)^{-1}\right) f((aq+b)(cq+d)^{-1}),\\
(\gamma \cdot g)(q) & = |a+b\ol{q}|^2 \tau\left((a+b\ol{q})^{-1}\right) g((aq+b)(cq+d)^{-1}).\end{align}
In particular, if $\Gamma \subset G$ is a subgroup, then $f$, respectively $g$, is $\Gamma$-invariant if and only if 
\begin{align} f(\gamma\cdot q) &= \frac{1}{|cq+d|^2}\tau(cq+d) f(q),
\end{align}
respectively,
\begin{align}g(\gamma\cdot q) &= \frac{1}{|a+b\ol{q}|}\tau(a+b\ol{q}) g(q),\end{align} Note that $|cq+d|=|a+b\ol{q}|$ if $g\in \Sp(1,1)$.

For $\gamma\in M_2(\HH)$, we write
\begin{align} \gamma^\dag = R\gamma R =  \mtwo{d & c \\ b & a},\ \ \ \text{ where }\ \ \ R = \mtwo{& 1 \\ 1 & },\ \ \ \gamma = \mtwo{a & b\\c& d}.\end{align}
Then $\gamma \mapsto \gamma^{\dag}$ is an order-two automorphism of the ring $M_2(\HH)$, that preserves both $G$ and $K$, and induces a vector space isomorphism
\begin{align}C^\infty(G,V) \rar C^\infty(G,V),\ \ \ F \mapsto F^\dag,\ \ \ F^{\dag}(x) = F(x^\dag).\end{align}
For $(\tau,V)\in \Rep(K)$, we define
\begin{align} (\tau^{\dag},V^{\dag})\in \Rep(K),\ \ \  
V^{\dag} =V,\ \ \ \tau^\dag(k) = \tau(k^\dag).\end{align}

Let $C^\infty(B,V^\dag)$ denote the $G$-representation obtained from $\Ind_{K}^G(\tau^{\dag},V^{\dag})$ via (\ref{trivia}). Now we define a vector space isomorphism
\begin{align} C^\infty(B,V) \rar C^\infty(B,V^\dag), \ \ f \mapsto f^{\dag},\ \ f^{\dag}(q) = f(\ol{q}).\end{align}

\begin{lemma}\label{daglem}
There is a commutative diagram of vector space isomorphisms
$$ \xymatrix{\Ind_{K}^G(\rho,V)\ar[d] \ar[r]^{F \mapsto F^\dag} &\Ind_K^G(\tau^\dag,V^{\dag}) \ar[d]\\
	C^\infty(B,V) \ar[r]_{f\rar f^\dag} &C^\infty(B,V^{\dag}),}$$
where the vertical arrows are $G$-equivariant, given by (\ref{trivia}), and the top and bottom arrows satisfy
$$ (\gamma\cdot f)^\dag = \gamma^\dag \cdot f^{\dag},\ \ \ (\gamma\cdot F)^{\dag} = \gamma^\dag \cdot F^{\dag},\ \ \ \gamma\in G.$$
\end{lemma}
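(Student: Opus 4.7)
The plan is to reduce the entire lemma to the single formal identity
$$ \sigma(q)^{\dag} = \sigma(\ol{q}),\ \ \ q\in B, \qquad (\ast)$$
which expresses the compatibility of the involution $\gamma\mapsto\gamma^{\dag}$ on $G$ with the quaternionic conjugation $q\mapsto\ol{q}$ on $B$. The first step is to verify $(\ast)$ by a direct $2\times 2$ matrix computation: conjugation by $R$ simultaneously swaps the two rows and the two columns of $\sigma(q)$, interchanging the off-diagonal entries $q$ and $\ol{q}$, while the scalar $(1-\rN q)^{-1/2}$ is invariant because $\rN\ol{q}=\rN q$. This takes one line.

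With $(\ast)$ in hand I would address the four assertions in turn. First, that $F\mapsto F^{\dag}$ maps $\Ind_K^G(\tau,V)$ into $\Ind_K^G(\tau^{\dag},V^{\dag})$: this is immediate from the fact that $(\cdot)^{\dag}$ is an involutive algebra automorphism of $M_2(\HH)$ which preserves $K$, so the $K$-equivariance of $F$ transports to $K$-equivariance of $F^{\dag}$ with respect to $\tau^{\dag}$. Second, the identity $(\gamma\cdot F)^{\dag} = \gamma^{\dag}\cdot F^{\dag}$ for the top horizontal arrow is a one-line computation using $(\gamma^{-1}x)^{\dag} = (\gamma^{\dag})^{-1} x^{\dag}$.

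Third, commutativity of the square reduces immediately to $(\ast)$ together with $\rN\ol{q}=\rN q$: both compositions send $F$ to the function $q\mapsto F(\sigma(q)^{\dag})\cdot|1-\rN q|^{n+2}$ on $B$. Fourth, the equivariance of the bottom arrow is then automatic: since the vertical arrows are $G$-equivariant isomorphisms and the top arrow intertwines the twisted $G$-actions with respect to $\gamma\mapsto\gamma^{\dag}$, transport of structure gives the corresponding property for $f\mapsto f^{\dag}$.

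The main difficulty is not analytic but bookkeeping: four distinct involutions appear simultaneously in the statement (the bar on $\HH$, the star on quaternionic matrices, the dagger on $M_2(\HH)$, and the dualization of $K$-representations), and one must keep careful track of how each interacts with the trivialization $F\mapsto f$ and with the fractional linear $G$-action. Once the key compatibility $(\ast)$ is verified, everything else in the lemma is formal.
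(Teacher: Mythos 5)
Your proposal is correct and follows essentially the same route as the paper: both arguments rest on the formal properties of the involution $\gamma\mapsto\gamma^{\dag}$ (that it preserves $K$ and commutes with inversion) together with the key identity $\sigma(q)^{\dag}=\sigma(\ol{q})$, which is exactly how the paper identifies the bottom arrow as $f\mapsto f(\ol{q})$. The only difference is one of ordering — the paper invokes uniqueness of the induced bottom map and then identifies it, while you verify the commutativity of the square directly — which is immaterial.
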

\begin{proof} 
	For $F\in \Ind_K^G(\tau,V)$, we have
	$$ F^\dag(xk^{-1}) = F(x^{\dag} (k^\dag)^{-1})= \tau^\dag(k) F^{\dag}(x),$$
	which shows the top arrow is well-defined. The intertwining property of $F\mapsto F^{\dag}$ follows from the fact that $(\gamma^{\dag})^{-1} =(\gamma^{-1})^{\dag}$. Since the vertical arrows are $G$-equivariant isomorphisms by definition, there's a unique isomorphism $f \mapsto f^{\dag}$ satisfying the rest of the claim. That it is the map $f^{\dag}(q) = f(\ol{q})$ follows from the fact that $\sigma(q)^{\dag}=\sigma(\ol{q})$ for the map $\sigma: B\rar G$ in (\ref{sigma}).  
\end{proof}

\begin{prop}\label{dagprop}Let $f\in C^\infty(B,V_n)$, and $\Gamma\subset G$ be a subgroup. Then
	$$ f(\gamma\cdot q) = |cq+d|^2\rho_m\left(cq+d\right)\cdot f(q) \text{\ \ \ for all\ \ } \gamma=\mtwo{a & b \\c & d}\in \Gamma,\ q\in B,$$
	if and only if $f^{\dag}(q) =f(\ol{q})$ satisfies
	$$f^\dag(\gamma\cdot q)= |a+b\ol{q}|^2\rho_n(a+b\ol{q}) f^{\dag}(q)\ \ \text{\ \ \ for all\ \ }\gamma=\mtwo{a & b \\ c & d}\in \Gamma,\ q\in B.$$
\end{prop}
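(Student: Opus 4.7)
The plan is to reduce the equivalence to a short substitution using an identity implicit in the proof of the preceding Lemma: since $\jmath(\gamma, q) \in K$ is diagonal, the vanishing of its bottom-left entry gives
\[
\overline{\gamma \cdot q} = (c + d\overline{q})(a + b\overline{q})^{-1} = \gamma^\dag \cdot \overline{q},
\]
where the right-hand side is the fractional linear action of $\gamma^\dag = \gamma(d, c, b, a)$ on $\overline{q}$. The same computation yields the magnitude equality $|cq + d| = |a + b\overline{q}|$.

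Granted this identity, the forward implication is a direct calculation. Assuming the first transformation rule, compute
\[
f^\dag(\gamma \cdot q) = f(\overline{\gamma \cdot q}) = f(\gamma^\dag \cdot \overline{q}),
\]
then apply the hypothesis to $\gamma^\dag$ at the point $\overline{q}$, noting that the bottom row of $\gamma^\dag$ is $(b, a)$:
\[
f(\gamma^\dag \cdot \overline{q}) = |b\overline{q} + a|^2 \rho_n(b\overline{q} + a) f(\overline{q}) = |a + b\overline{q}|^2 \rho_n(a + b\overline{q}) f^\dag(q).
\]
This is the desired transformation rule for $f^\dag$. The reverse implication follows by the same argument, using $(\dag)^2 = \id$ to swap the roles of $f$ and $f^\dag$.

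The one point requiring care is that the hypothesis is applied to $\gamma^\dag$, which presupposes $\gamma^\dag \in \Gamma$ for each $\gamma \in \Gamma$; in other words, $\Gamma$ must be stable under the involution $\dag$. This is a mild closure condition that should be part of the implicit setup for the arithmetic subgroups used later. More structurally, the proposition can be read as a direct consequence of Lemma \ref{daglem}: the intertwining relation $(\gamma \cdot f)^\dag = \gamma^\dag \cdot f^\dag$, combined with the identification $\rho_n^\dag = \lambda_n$ coming from the swap of the two $\SU(2)$-factors, together with the magnitude equality $|cq+d| = |a+b\overline{q}|$, converts the $\rho_n$-transformation rule for $f$ under $\gamma$ into the $\lambda_n$-transformation rule for $f^\dag$ under $\gamma^\dag$ — which is precisely the two sides of the stated equivalence.
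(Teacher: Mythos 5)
Your argument is correct and is essentially the paper's own proof unpacked: the identity $\ol{\gamma\cdot q}=\gamma^{\dag}\cdot\ol{q}$ on which you rely is exactly what Lemma \ref{daglem} encodes via $\sigma(q)^{\dag}=\sigma(\ol{q})$ and the intertwining relation $(\gamma\cdot f)^{\dag}=\gamma^{\dag}\cdot f^{\dag}$. Your remark that the equivalence as stated silently presupposes $\Gamma^{\dag}=\Gamma$ is fair --- the paper's proof passes from $\Gamma$-invariance of $f$ to $\Gamma^{\dag}$-invariance of $f^{\dag}$ and makes the same identification without comment.
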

\begin{proof} Let $(\tau,V)\in \Rep(K)$ be the representation $V_0\boxtimes V_n$, so that $\rho = \rho_{\tau}$. Then the first statement holds if and only if $f\in C^{\infty}(B,\rho_{\tau})^{\Gamma}$, and the second if and only if $f^{\dag}\in C^{\infty}(B,\lambda_{\tau})^{\Gamma}$. By Lemma \ref{daglem}, $\gamma \cdot f = f$ if and only if $\gamma^{\dag} \cdot f^{\dag} = f^{\dag}$. 
\end{proof}

\subsection{Dirac Operators}

Let $V$ be a representation of $K=\HH^1\times \HH^1$. Recall that for a smooth function $f: G \rar V$, and $X\in \frakp_0$, the Lie derivative of $f$ along $X$ is
\begin{align}\label{Xdiff} X\cdot f: G \rar V,\ \ \ (X\cdot f)(g) = \left.\frac{d}{dt}f(g\exp(tX))\right|_{t=0}.\end{align}
For $f\in C^\infty(G,V)$, we put
\begin{align} df: G \rar \frakp	\otimes V,\ \ \ f = \sum_{\beta\in \Phi_{\frakp}} E_{-\beta} \otimes E_{\beta}\cdot f.\end{align}
Identifying $\frakp_0$ with $\HH$ as in (\ref{p0}), the adjoint action of $K$ on $\frakp_0$ is 
\begin{align}\label{p0act} (a,b)\cdot x = axb^{-1},\ \ x\in \frakp_0=\HH,\ \ \ a,b\in \HH^1.\end{align}
Then $K$ acts similarly on the complexification $\frakp=\HH_{\CC}$.

\begin{lemma}\label{pV11} The map $\frakp = \HH_{\CC} \rar V_1\boxtimes V_1^*$ sending $1,\qi,\qj,\qk$, respectively, to	
		$$ X\otimes \partial_X + Y \otimes \partial_Y,\ \ iX\otimes \partial_X - i Y\otimes \partial_Y,\ \ X\otimes \partial_Y - Y\otimes \partial_X,\ \ iX\otimes \partial_Y + i Y \otimes \partial_X,$$
is an isomorphism of $K$-representations.
\end{lemma}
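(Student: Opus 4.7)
The target $V_1 \boxtimes V_1^*$ is irreducible as a $K=\HH^1\times\HH^1$-representation, since each factor of $K$ acts irreducibly on its corresponding tensor factor. By Schur's lemma, any nonzero $K$-equivariant linear map from $\HH_\CC$ into it is injective, and since both sides have $\dim_\CC=4$, also surjective. So it suffices to show the given map $\phi$ is nonzero and $K$-equivariant.

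For nontriviality, I would identify $V_1\otimes V_1^*$ with $M_2(\CC)$ via the correspondences $X\otimes \partial_X\leftrightarrow E_{11}$, $X\otimes\partial_Y\leftrightarrow E_{12}$, $Y\otimes \partial_X\leftrightarrow E_{21}$, $Y\otimes\partial_Y\leftrightarrow E_{22}$. Under this identification, the images of $1,\qi,\qj,\qk$ under $\phi$ become precisely $I$, $\iota(\qi)$, $\iota(\qj)$, $\iota(\qk)$ (using (\ref{emb})); so $\phi$ is visibly nonzero and in fact coincides with the algebra isomorphism $\iota_\CC\colon \HH_\CC\isomto M_2(\CC)$ of (\ref{iota}).

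For $K$-equivariance, since $K$ is connected it is enough to check at the Lie algebra level. By differentiating (\ref{p0act}), the action of $(\alpha,\beta)\in \frakk_0=\HH_0\oplus \HH_0$ on $x\in\HH_\CC=\frakp$ is $\alpha x-x\beta$; on $V_1\otimes V_1^*$ it is $d\rho_1(\alpha)\otimes 1+1\otimes d\rho_1^*(\beta)$, where $\rho_1$ is the $\HH^1$-representation on $V_1$. Structurally, once $\phi$ is identified with $\iota_\CC$, the action $x\mapsto axb^{-1}$ is transported to $M\mapsto \iota(a)M\iota(b)^{-1}$ on $M_2(\CC)$ (since $\iota_\CC$ is an algebra homomorphism), and this coincides with the $V_1\boxtimes V_1^*$-action on $V_1\otimes V_1^*\cong M_2(\CC)$---left multiplication by $\iota(a)$ from the $V_1$-factor and right multiplication by $\iota(b)^{-1}$ from the dual action on the $V_1^*$-factor.

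The main obstacle is pinning down conventions for this last identification: since the polynomial action $g\cdot f=f(aX+bY,cX+dY)$ acts on coefficients in the basis $(X,Y)$ by $g^T$ rather than $g$, the assertion that $\rho_1(\iota(q))$ is left multiplication by $\iota(q)$ on $\CC^2$ should be set up via the isomorphism (\ref{V1H}) of $V_1$ with $\HH$ (viewed as a right $\CC$-module under left-multiplication by $\HH^1$), under which the identification becomes tautological. As a fallback, one can simply verify equivariance by a direct finite check of generators $\alpha,\beta\in\{\qi,\qj,\qk\}$ against basis vectors $x\in\{1,\qi,\qj,\qk\}$, a routine calculation in which many cases are equivalent by the symmetry swapping the two $\HH^1$-factors.
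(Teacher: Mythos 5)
Your proposal is correct and follows essentially the same route as the paper: the paper's proof likewise observes that $\iota_\CC$ transports the $K$-action $x\mapsto axb^{-1}$ on $\frakp$ to $(A,B)\cdot M=AMB^{-1}$ on $M_2(\CC)$, and then composes with the identification $E_{11},E_{12},E_{21},E_{22}\mapsto X\otimes\partial_X,\,X\otimes\partial_Y,\,Y\otimes\partial_X,\,Y\otimes\partial_Y$. The convention subtlety you flag (the transpose in the substitution action on $\CC[X,Y]$ versus left multiplication on $\CC^2$) is real but is likewise left implicit in the paper's proof, which resolves it only through the choice of identification of $M_2(\CC)$ with $V_1\boxtimes V_1^*$; your Schur's-lemma step is harmless but not needed once the map is recognized as the bijection $\iota_\CC$ followed by a linear isomorphism.
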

\begin{proof}
The map $\iota_{\CC}: \HH_{\CC} \rar M_2(\CC)$ identifies the action of $K$ on $\frakp$ given by (\ref{p0act}), with that of $\SU(2)\times \SU(2)$ on $M_2(\CC)$ given by
$$ (A,B)\cdot X = AXB^{-1},\ \ X\in M_2(\CC),\ \ \ (A,B)\in \SU(2)\times \SU(2),$$
where $X=\iota(x)$, $A=\iota_{\CC}(a)$, $B=\iota_\CC(b)$, for $a,b\in \HH^1$. Composing with 
$$ M_2(\CC) \rar V_1\boxtimes V_1^*,\ \ \ \mtwo{a & b \\ c & d} \mapsto a X \otimes \partial_X + b X\otimes \partial_Y + c Y\otimes \partial_X + d Y\otimes \partial_Y$$
we obtain the desired map.
\end{proof}

The representation of $K=\HH^1\times \HH^1$ on $V_n\subset \CC[X,Y]$ obtained by projection onto the second $\HH^1$ factor, can be written as $V_0\boxtimes V_n$. Using $V_1 \isomto \HH$ fixed in (\ref{V1H}, we write
\begin{align}W_n = \HH\otimes V_{n} = V_1\boxtimes V_{n}\in \Rep(K).
\end{align}
Since
$$ V_1^*\otimes V_n \cong V_{n-1}\boxtimes V_{n+1},$$
we have isomorphisms of $K$-representations
$$ (V_1\boxtimes V_1^*)\otimes (V_0 \boxtimes V_n) \cong V_1 \boxtimes (V_{n+1}\oplus V_{n-1})\cong W_{n+1} \oplus W_{n-1}.$$
Then writing $V_n$ for $V_0\boxtimes V_n$, and using Lemma (\ref{pV11}) we obtain
$$ \frakp\otimes V_n \isomto W_{n+1}\oplus W_{n-1}.$$
Let
\begin{align}\label{proj}\pi^{-}_n: \frakp\otimes V_n \rar W_{n-1}\end{align}
denote the canonical projection map. Then the Dirac operator $\scrD$ of W. Schmidt \cite{Sch89} associated with $V_n$ can be identified with
\begin{align}\label{Dirac} \scrD: C^\infty(G,V_{n}) \rar C^{\infty}(G,W_{n-1}),\ \ \scrD(f)= \pi_{n}^{-}(df).\end{align}

Since $\CC[X,Y]=\bigoplus_{n\geq 0} V_n$, we have $\HH[X,Y]=\bigoplus_{n\geq 0} W_n$. By a smooth function $f: G \rar \HH[X,Y]$ we mean one whose image is contained in a finite-dimensional subspace of $\HH[X,Y]$, such that, with respect to a real basis, each coordinate function $G \rar \RR$ is smooth. Let $C^\infty(G,\HH[X,Y])$ be the space of all such functions. Then $\frakg_0$ acts on this space by Lie differentiation (\ref{Xdiff}). We define an operator $\ol{\partial}$ acting on $\CC^\infty(G,\HH[X,Y])$ by
\begin{align}\label{dG} \ol{\partial} = E_0 + \qi E_1 + \qj E_2 + \qk E_3.\end{align}

\begin{prop}\label{dXYprop}The following diagram is commutative:
	$$ \xymatrix{
		C^{\infty}(G,V_n) \ar[d]_d \ar[r]^-{\frac{1}{24}(\partial_X - \qj \partial_Y)} &C^{\infty}(G,W_{n-1}) \ar[d]^{\ol{\partial}}\\
		C^{\infty}(G,\frakp\otimes V_n) \ar[r]^-{\pi^-_n} & C^\infty(G,W_{n-1}).}$$
In particular, $\scrD(f)=0$ if and only if $\ol{\partial}(f_X - \qj f_Y)=0$.
\end{prop}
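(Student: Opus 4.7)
The plan is to prove the commutativity of the diagram by first establishing, for all $q\in\frakp_0=\HH$ and $g\in V_n$, the key identity
\begin{equation*}
\pi^-_n(q\otimes g) \;=\; q\cdot (g_X - \qj\, g_Y),
\end{equation*}
where the right-hand side uses left quaternion multiplication in $W_{n-1}\subset\HH[X,Y]$. Granting this, the commutativity follows by a short computation. The basis $\{e_0,e_1,e_2,e_3\}=\{1,\qi,\qj,\qk\}$ of $\frakp_0$ from $(\ref{p0})$ is Killing-orthogonal with $B(e_i,e_j)=24\,\delta_{ij}$, so the basis-free expression for $df$ specializes to $df = \tfrac{1}{24}\sum_{i=0}^{3} e_i\otimes E_i f$. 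Applying $\pi^-_n$ term-by-term and using that $\qj$ is constant while $E_i$ is a derivation,
\begin{align*}
\pi^-_n(df)
&\;=\; \tfrac{1}{24}\sum_i e_i\cdot(E_i f_X - \qj\, E_i f_Y) \\
&\;=\; \tfrac{1}{24}\sum_i e_i E_i(f_X - \qj f_Y) \\
&\;=\; \tfrac{1}{24}\,\ol{\partial}(f_X - \qj f_Y),
\end{align*}
where the last equality invokes the definition $(\ref{dG})$ of $\ol{\partial}$. This is exactly the asserted commutativity.

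To prove the key identity, by $\RR$-linearity in $q$ it suffices to verify it on the four generators $q\in\{1,\qi,\qj,\qk\}$ of $\frakp_0$. For each such $q$, Lemma $\ref{pV11}$ gives the image of $q$ in $V_1\boxtimes V_1^*$; I then apply $\id_{V_1}\otimes P^-$ (using $P^-$ from $(\ref{Pmin})$) to $q\otimes g$ to land in $V_1\otimes V_{n-1}$, and finally the identification $V_1\cong\HH$ from $(\ref{V1H})$ to arrive in $W_{n-1}$. The four resulting expressions are $g_X-\qj g_Y$, $\qi g_X - \qk g_Y$, $\qj g_X + g_Y$, and $\qk g_X + \qi g_Y$ for $q=1,\qi,\qj,\qk$ respectively; each is recognized as $q\cdot(g_X-\qj g_Y)$ using the quaternion relations $\qi\qj=\qk$, $\qj^2=-1$, and $\qk\qj=-\qi$.

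The chief technical obstacle is sign-bookkeeping in the identification $(\ref{V1H})$: one must keep in mind that the $\CC$-structure on $\HH$ is right-multiplication via the embedding $i\mapsto\qi$, so for instance $iY\in V_1$ is sent to $\qk\in\HH$. Once this convention is tracked uniformly across the four basis vectors, each case collapses to a one-line computation. Finally, the \emph{in particular} assertion is immediate from the definition $\scrD=\pi^-_n\circ d$ in $(\ref{Dirac})$ combined with $\tfrac{1}{24}\neq 0$.
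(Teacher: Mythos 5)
Your proof is correct and follows essentially the same route as the paper's: both reduce to the normalization $df=\tfrac{1}{24}\sum_i e_i\otimes E_i f$ coming from the Killing form, and both compute $\pi^-_n$ explicitly through Lemma \ref{pV11}, the projection $P^-$, and the identification $V_1\cong\HH$ of (\ref{V1H}); your only reorganization is to package the computation as the pointwise identity $\pi^-_n(q\otimes g)=q\cdot(g_X-\qj\,g_Y)$ checked on the basis $1,\qi,\qj,\qk$, which is a clean way to factor the same calculation. Your sign conventions (e.g.\ $Y\mapsto-\qj$, $iY\mapsto\qk$) agree with what the paper's displayed computation actually uses.
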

\begin{proof} Let $\{v_i\}$, $\{v_i'\}$ be a pair of bases for $\frakp$ such that the dual of the first is identified with the second under the isomorphism $\frakp \rar \frakp^*$ provided by the Killing form. Then
	$$ e= \sum_{i} v_i \otimes v_i'\in \frakp\otimes \frakp$$
	is the image of the identity map $\mathbbm{1}_{\frakp}\in \End(\frakp)$ under the composition of isomorphisms 
	$$\End(\frakp)\cong \frakp\otimes \frakp^* \simeq \frakp \otimes \frakp.$$
	In particular, $e$ does not depend on the choice of $\{v_i\}$, $\{v_i'\}$, and we have 
	$$ e = \sum_{\mu\in \Phi_{\frakp}} E_{-\mu}\otimes E_{\mu}  = \frac{1}{24} \sum_{i=0}^3 E_i\otimes E_i,$$
	where $E_i$ are the image of $e_i\in \HH_{\CC}$ in $\frakp$ under the map of Lemma \ref{pV11}. Writing
	$$ \delta: \frakp \otimes C^\infty(G,V) \rar C^\infty(G,V),\ \ \ \delta(X\otimes f) = X\cdot f,$$ we have
	$$ df = (\mathbbm{1}_{\frakp}\otimes \delta) (e\otimes f) = \frac{1}{24}\sum_{i=0}^3 E_i\otimes E_i f.$$
Then by Lemma \ref{pV11}, we can write
	\begin{align*} 24(df) &=(X\otimes \partial_X + Y \otimes \partial_Y)\otimes E_0 f + (iX\otimes \partial_X - i Y\otimes \partial_Y)\otimes E_1 f\\
	+ &(X\otimes \partial_Y - Y\otimes \partial_X)\otimes E_2 f + (iX\otimes \partial_Y + i Y \otimes \partial_X)\otimes E_3f,\end{align*}
as a map $G \rar V_1\otimes V_1^*\otimes V_{n-1}$. Writing $P_n^{-}: V_1^*\otimes V_n \rar V_{n-1}$ for the projection map, we have
	\begin{align*} &24(\mathbbm{1}_{V_1}\otimes P_{n}^-)(df)=X\otimes (E_0 + iE_1)f_X + X\otimes (E_2 +i E_3 )f_Y + Y\otimes (-E_2 + i E_3) f_X +  Y\otimes (E_0  - i E_1 )f_Y\end{align*}
	using the fact that the operators $\partial_X$ and $\partial_Y$ commute with Lie differentiation.
	Composing with the isomorphism $V_1\otimes V_{n-1} \rar W_{n-1}$ that sends $X$, $Y\in V_1$ to $1$, $\qj\in \HH$, respectively, we have
	\begin{align*} 24\pi_{n}^{-}(df)&= (E_0 + \qi E_1)f_X + (E_2+\qi E_3)f_Y -\qj (-E_2 + \qi E_3) f_X - \qj (E_0 - \qi E_1) f_Y\\
	&= (E_0 + \qi E_1 + \qj E_2 + \qk E_3) (f_X - \qj f_Y) = \ol{\partial}(\partial_X - \qj \partial_Y) f.
	\end{align*}
\end{proof}

Now we repeat the above for functions on $B$ rather than $G$. For $f: B \rar V_n$ smooth, we put
$$ df = \sum_{i=0}^3 E_i\otimes \pard{f}{x_i}.$$
The following Dirac operator was considered in \cite{LiuZhang}
\begin{align}\label{LZDirac} \calD(f)= \pi_n^{-}(df),\end{align}
with $\pi_{n}^-$ the same as in (\ref{proj}). The same result then holds:

\begin{prop}\label{d2prop}The following diagram is commutative:
		$$ \xymatrix{
			C^{\infty}(B,V_n) \ar[d]_d \ar[r]^-{\frac{1}{24}(\partial_X - \qj \partial_Y)} &C^{\infty}(B,W_{n-1}) \ar[d]^{\ol{\partial}}\\
			C^{\infty}(B,\frakp\otimes V_n) \ar[r]^-{\pi^-_n} & C^\infty(B,W_{n-1}).}$$
	In particular, $\calD(f)=0$ if and only if $\ol{\partial}(f_X - \qj f_Y)=0$.
\end{prop}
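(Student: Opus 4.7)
The plan is to mirror the proof of Proposition \ref{dXYprop}, replacing Lie differentiation $E_i \cdot f$ by the ordinary partial derivative $\partial f/\partial x_i$ throughout. All the representation-theoretic ingredients are unchanged: only the differentiation operators on the source space differ, and the algebraic manipulations that lead from $\pi_n^-(df)$ to $\ol{\partial}(f_X - \qj f_Y)$ never used any specifically group-theoretic property of the $E_i$.

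More precisely, I would first observe that the element
$$ e = \sum_{\mu \in \Phi_{\frakp}} E_{-\mu} \otimes E_{\mu} = \frac{1}{24}\sum_{i=0}^3 E_i \otimes E_i \;\in\; \frakp \otimes \frakp,$$
computed in the proof of Proposition \ref{dXYprop}, is a purely Lie-algebraic identity that makes no reference to where the corresponding vector fields act. Defining
$$ \delta: \frakp \otimes C^\infty(B,V_n) \rar C^\infty(B,V_n),\ \ \ \delta(E_i \otimes f) = \pard{f}{x_i},$$
one has $df = (\mathbbm{1}_{\frakp} \otimes \delta)(e \otimes f)$ exactly as before, so
$$ df = \frac{1}{24}\sum_{i=0}^3 E_i \otimes \pard{f}{x_i}.$$

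Next I would apply Lemma \ref{pV11} to expand $df$ as a function $B \to V_1 \otimes V_1^* \otimes V_n$, and then compose with $\mathbbm{1}_{V_1} \otimes P_n^-$, where $P_n^-: V_1^* \otimes V_n \to V_{n-1}$ is the projection from (\ref{Pmin}). This yields, exactly as in the proof of Proposition \ref{dXYprop},
\begin{align*}
24(\mathbbm{1}_{V_1} \otimes P_n^-)(df) &= X \otimes (\partial_0 + i\partial_1) f_X + X \otimes (\partial_2 + i\partial_3) f_Y \\
&\quad + Y \otimes (-\partial_2 + i\partial_3) f_X + Y \otimes (\partial_0 - i\partial_1) f_Y,
\end{align*}
where $\partial_i = \partial/\partial x_i$. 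The only step requiring comment is that $\partial_X, \partial_Y$ on the target $V_n$ commute with $\partial/\partial x_i$ on the source $B$, which here is trivial since they act on disjoint sets of variables (in the original proof, this was the less trivial fact that $\partial_X, \partial_Y$ commute with Lie differentiation).

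Finally, I would identify $V_1 \otimes V_{n-1} \isomto W_{n-1}$ via $X \mapsto 1$, $Y \mapsto \qj$, and collect terms to obtain
$$ 24\,\pi_n^-(df) = (\partial_0 + \qi \partial_1 + \qj \partial_2 + \qk \partial_3)(f_X - \qj f_Y) = \ol{\partial}(f_X - \qj f_Y),$$
where $\ol{\partial}$ now denotes the Fueter operator on $C^\infty(B,\HH[X,Y])$ defined by (\ref{dG}) with $E_i$ replaced by $\partial/\partial x_i$. The commutativity of the diagram follows, and the equivalence $\calD(f) = 0 \iff \ol{\partial}(f_X - \qj f_Y) = 0$ is then immediate from the commutativity together with the injectivity of multiplication by a nonzero scalar. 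I do not anticipate any genuine obstacle; the main thing to be careful about is the bookkeeping of the $\frac{1}{24}$ factor and the sign conventions in Lemma \ref{pV11}, which must match those fixed earlier.
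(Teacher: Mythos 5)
Your proposal is correct and is essentially identical to the paper's own proof, which simply observes that the argument for Proposition \ref{dXYprop} goes through verbatim because the only domain-dependent input was that $\partial_X$ and $\partial_Y$ commute with the differentiation operators on the source, a fact that holds just as well for $\partial/\partial x_i$ as for Lie differentiation. The only point to watch is that the paper defines $df$ on $B$ as $\sum_i E_i\otimes \partial f/\partial x_i$ \emph{without} the $\tfrac{1}{24}$ you carry over from the $G$-case, but this normalization discrepancy is immaterial for the kernel statement.
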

\begin{proof}In the proof of Proposition \ref{dXYprop}, the domain only entered the picture via the fact that the operators $\partial_X$ and $\partial_Y$ commute with Lie differentiation. Since they also commute with the operators $\pard{}{x_i}$, the exact same proof works.
\end{proof}

Recall that the operator $\ol{\partial}$ on $C^n(B,W_n)$ is the (left-sided) Fueter operator  
\begin{align} \ol{\partial}_l = \frac{d}{dt} + \qi \frac{d}{dx} + \qj \frac{d}{dy} + \qk\frac{d}{dz}.\end{align}
A function $h: \HH \rar \HH$ is called \textit{Fueter regular} or just \textit{regular} if $\ol{\partial}_l h = 0$. (A standard reference is \cite{Sud79}. See also the appendix for more information.)

\begin{thm}\label{thm1}For $F\in C^\infty(G,V_n)$, let $f(q)=(1-\rN q)^{n-2} F(\sigma q)\in C^\infty(B,V_n)$, and write 
	$$ f_X - \qj f_Y = \sum_{i=0}^{n-1} \phi_{i+1} X^i Y^{n-i},\ \ \ \phi_i: B \rar \HH.$$
Then $\scrD(F)=0$ if and only if each $\phi_i$ is Fueter-regular, where $\scrD$ is the Dirac operator (\ref{Dirac}).
\end{thm}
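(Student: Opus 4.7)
By Proposition \ref{dXYprop}, the condition $\scrD(F)=0$ is equivalent to $\ol\partial\Phi=0$ on $G$, where $\Phi := F_X-\qj F_Y$. Since $\scrD$ is a $K$-equivariant left-invariant differential operator, the map $F\mapsto 24\scrD(F)=\ol\partial\Phi$ carries $\Ind_K^G(V_n)$ into $\Ind_K^G(W_{n-1})$, so $\ol\partial\Phi$ is $K$-equivariant as a function on $G$. Thus $\ol\partial\Phi$ vanishes on all of $G$ if and only if $\ol\partial\Phi(\sigma q)=0$ for every $q\in B$, reducing the problem to an explicit computation along the section $\sigma(B)$.

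The core calculation is a chain-rule expansion of the Lie derivatives $(E_iF)(\sigma q)$ for each $E_i\in\frakp_0$ corresponding to $e_i\in\{1,\qi,\qj,\qk\}$. Using $\exp(tE_i)=\cosh(t)I+\sinh(t)E_i$, one verifies that the curve $\sigma(q)\exp(tE_i)\cdot 0$ has velocity $(1-\rN q)e_i$ at $t=0$. Decomposing $\sigma(q)\exp(tE_i)=\sigma(q(t))k(t)$ with $k(0)=I$, an explicit matrix computation yields
$$ \dot k = -\tfrac{1}{2}\rT(\ol q e_i)\,I + \diag(q\ol{e_i},\ \ol q e_i)\in\frakk_0. $$
Combined with the relation $F(\sigma q')=(1-\rN q')^{2-n}f(q')$ and the fact that the infinitesimal action $d\rho(\dot k)$ on $V_n$ coincides with $d\tau(D_i)$ for $D_i=\ol q e_i-\tfrac{1}{2}\rT(\ol q e_i)$, the product rule delivers
$$ (E_iF)(\sigma q) = (1-\rN q)^{2-n}\bigl[(1-\rN q)\partial_{x_i}f + ((n-2)\rT(\ol q e_i)-d\tau(D_i))f\bigr]. $$

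Assembling $\ol\partial\Phi(\sigma q)=\sum_i e_i\bigl[(E_iF)_X-\qj(E_iF)_Y\bigr](\sigma q)$ produces a first-order term proportional to $\ol\partial_l\phi(q)$, together with zeroth-order corrections arising from $(n-2)\rT(\ol q e_i)$ and from $d\tau(D_i)$. The cancellation of these corrections is the main obstacle: it rests on the identities $\sum_i e_i\rT(\ol q e_i)=2q$ and $d\tau(1)=n\cdot\id$ on $V_n$, together with the explicit commutators of $\partial_X-\qj\partial_Y$ with the action of $d\tau(D_i)$ and with left multiplication by a general quaternion. The specific exponent $n-2$ in the definition of $f$ is precisely the one for which all zeroth-order pieces collapse, yielding a relation of the form $\ol\partial\Phi(\sigma q)=c(q)\,\ol\partial_l\phi(q)$ with $c$ a nowhere-vanishing real function on $B$. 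Once this is established, $\ol\partial\Phi\equiv 0$ on $G$ is equivalent to $\ol\partial_l\phi\equiv 0$ on $B$, which in turn is precisely Fueter-regularity of each coefficient $\phi_i$ of $\phi$.
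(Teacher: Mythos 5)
Your overall strategy is sound and genuinely different from the paper's. The paper disposes of the hard half of the statement by citing \cite[Prop.~3.6]{LiuZhang} for the equivalence $\scrD(F)=0\iff\calD(f)=0$ (where $\calD$ is the ball operator (\ref{LZDirac})) and then invokes Proposition \ref{d2prop} to convert $\calD(f)=0$ into $\ol{\partial}_l(f_X-\qj f_Y)=0$. You instead work only with Proposition \ref{dXYprop} on the group and try to transfer $\ol{\partial}\Phi$ to the ball by hand, via the decomposition $\sigma(q)\exp(tE_i)=\sigma(q(t))k(t)$ and the $K$-equivariance of $F$. If completed, this would make the argument self-contained rather than dependent on an external citation, and your preliminary computations check out: $E_i^2=I$ gives $\exp(tE_i)=\cosh(t)I+\sinh(t)E_i$, the velocity of $\sigma(q)\exp(tE_i)\cdot 0$ at $t=0$ is indeed $(1-\rN q)e_i$, and your candidate $\dot k$ does lie in $\frakk_0$ since $\rT(q\ol{e_i})=\rT(\ol{q}e_i)$. (Note that both you and the paper implicitly need $F\in\Ind_K^G(V_n)$ rather than arbitrary $F\in C^\infty(G,V_n)$, since otherwise $f$ does not determine the Lie derivatives of $F$ along $\frakp$ at points of $\sigma(B)$.)

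The gap is that the decisive step is asserted rather than proved. Everything you have actually verified is routine; the entire content of the theorem (and of the cited Liu--Zhang proposition) is the claim that after summing $\sum_i e_i\bigl[(E_iF)_X-\qj(E_iF)_Y\bigr](\sigma q)$, the zeroth-order terms coming from $(n-2)\rT(\ol q e_i)$ and from $d\tau(D_i)$ cancel exactly, and that this happens precisely for the exponent $n-2$. You name the identities you expect to use ($\sum_i e_i\rT(\ol q e_i)=2q$, $d\tau(1)=n\cdot\id$, commutators of $\partial_X-\qj\partial_Y$ with $d\tau(D_i)$) but do not carry out the bookkeeping, and this is exactly where errors of sign, ordering (left versus right quaternion multiplication), and normalization typically enter. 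In particular, your claimed conclusion $\ol{\partial}\Phi(\sigma q)=c(q)\,\ol{\partial}_l\phi(q)$ with a \emph{scalar} nowhere-vanishing $c(q)$ is stronger than what you need and stronger than what such computations usually produce; an invertible zeroth-order operator (e.g.\ a twist by some $\tau(k_q)$) in place of $c(q)$ would suffice for the equivalence but would have to be identified. Until the cancellation is exhibited explicitly, the proposal is a plausible plan rather than a proof; to close it you must either perform that computation in full or fall back on the citation the paper uses.
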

\begin{proof}
The equivalence
$$ \scrD(F) = 0 \iff \calD(f)=0$$
is proven in \cite[Prop. 3.6]{LiuZhang}.
By Proposition \ref{d2prop}, we have
$$ \calD(f)=0\iff \ol{\partial}_l(\partial_X f -\qj\partial_Y f)=0\iff \ol{\partial}_l \phi_i=0,\ \ i=1,\cdots,n.$$
\end{proof}

Let $\HH^n$ be a considered a complex vector space via right multiplication by $\CC\subset \HH$, considered as a space of quaternionic row vectors, with hermitian inner product $(q_1,q_2) = \ol{q_1} \cdot {}^t q_2$. 

Define a $G$-action on $C^\infty(B,\HH^{n})$ by
\begin{align}\label{GRact} (\gamma \cdot h)(q) = |cq+d|^{-2}(cq+d)^{-1} h(\gamma^{-1}q) \cdot {}^t R_{n-1}(cq+d),\ \ \ u=\gamma^{-1},q)^{-1},\end{align}
where $R_{n-1}(u)\in \GL_{n}(\CC)$ is the matrix of $\rho_{n-1}(u)$ acting on $V_{n-1}$ with respect to the basis $X^{n-1-i}Y^i$, $i=0,\cdots, n-1$. Fix a hermitian inner product $[\ ,\ ]$ on $\CC^n$ with respect to which the action of $\HH^1$ via $R_{n-1}$ is unitary, and extend it to $\HH^n =\HH\otimes \CC^n$.

The $L^2$-norm on $C^\infty(B,\HH^n)$ is given by
$$ \|h\|_2 = \left(\int_{B} [h(q),h(q)] 	 (1-\rN q)^{n+2} d\mu\right)^{\frac{1}{2}},$$
where $d\mu$ is the Lebesgue measure on $B\subset \HH$. 

Let $\scrR(B,\HH^n)$ denote the subspace of $C^\infty(B,\HH^n)$ consisting of Fueter-regular functions $h=(h_1,\cdots, h_n)$ of finite $L^2$-norm, for which 
\begin{align}\label{hwt} (\partial_Y - \qj \partial_X) \sum_{i=0}^{n-1} h_i X^{n-1-i} Y^{i} \in C^\infty(B,\CC[X,Y]).\end{align}

\begin{thm}\label{thm2}For $n\geq 2$, $\scrR(B,\HH^n)$ is an irreducible unitary representation of $G$, and the quaternionic discrete series representation with minimal $K$-type $V_0\boxtimes V_n$.
\end{thm}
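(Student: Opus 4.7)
The plan is to construct a $G$-equivariant $L^2$-isomorphism between $\scrR(B,\HH^n)$ and the $L^2$-kernel of the Liu--Zhang Dirac operator $\calD$ on $C^\infty(B,V_n)$, and then invoke the realization in \cite{LiuZhang} of the quaternionic discrete series of minimal $K$-type $V_0\boxtimes V_n$ as the latter space. The candidate map is already implicit in Proposition \ref{d2prop}: send $f\in C^\infty(B,V_n)$ to
\[\Psi(f)=(\phi_1,\ldots,\phi_n)\in C^\infty(B,\HH^n),\]
where $f_X-\qj f_Y=\sum_{i=0}^{n-1}\phi_{i+1}\,X^{n-1-i}Y^{i}$ in $C^\infty(B,W_{n-1})$. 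Since $\{1,\qj\}$ is $\CC$-linearly independent in $\HH$, vanishing of $\Psi(f)$ forces $f_X=f_Y=0$ and hence $f=0$ (for $n\geq 1$), so $\Psi$ is injective.

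\textbf{Image of $\Psi$ and the Fueter-regular kernel.} I would first verify that the image of $\Psi$ is precisely the subspace of $C^\infty(B,\HH^n)$ cut out by the compatibility condition (\ref{hwt}). Writing $f=\sum a_j X^{n-j}Y^j$, a direct computation shows that $(\partial_Y-\qj\partial_X)(\sum\phi_{i+1}X^{n-1-i}Y^i)$ has coefficients lying in the correct $\CC$-subspace of $\HH$ picked out by (\ref{hwt}); conversely a coefficient-wise linear-algebra argument recovers $f$ uniquely from any $h$ satisfying (\ref{hwt}). Combined with Proposition \ref{d2prop}, which says $\calD(f)=0$ iff every $\phi_i$ is Fueter-regular, this yields a bijection between the $\calD$-kernel in $C^\infty(B,V_n)$ and the space of componentwise Fueter-regular $h=(h_1,\ldots,h_n)$ satisfying (\ref{hwt}).

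\textbf{$G$-equivariance and $L^2$-matching.} The source carries the action (\ref{Gaction}) induced from $\Ind_K^G(V_0\boxtimes V_n)$ through (\ref{trivia}). To see that $\Psi$ intertwines this with (\ref{GRact}), apply $\partial_X-\qj\partial_Y$ to the right-hand side of (\ref{Gaction}); by the chain rule the action of $\rho_n((cq+d)^{-1})$ on $V_n$ becomes the induced action on $V_1^*\otimes V_{n-1}$. Unwinding $V_1\isomto\HH$ from (\ref{V1H}) and $V_1^*\isomto V_1$ from (\ref{V1dual}), the $V_1^*$-twist translates into left multiplication by $(cq+d)^{-1}$ on the $\HH$-valued coefficients, while the $V_{n-1}$-twist becomes right multiplication by ${}^t R_{n-1}(cq+d)$, reproducing (\ref{GRact}); the normalizing scalar comes from pairing $|cq+d|^2$ with $|(cq+d)^{-1}|^2=|cq+d|^{-2}$. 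Because $R_{n-1}$ is unitary, the $L^2$-condition on $\scrR(B,\HH^n)$ transports to the $L^2$-condition on the Liu--Zhang side up to a universal positive constant.

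Putting these together, $\Psi$ furnishes a $G$-equivariant Hilbert-space isomorphism from the $L^2$-kernel of $\calD$ in the Liu--Zhang model to $\scrR(B,\HH^n)$; the former is shown in \cite{LiuZhang} to be the irreducible unitary quaternionic discrete series with minimal $K$-type $V_0\boxtimes V_n$ (for $n\geq 2$), and the theorem follows. The main obstacle is the equivariance calculation: correctly aligning the identifications $V_1\simeq\HH$, $V_1^*\simeq V_1$, $\iota\colon\HH\hookrightarrow M_2(\CC)$ so that the automorphy factor $\rho_n((cq+d)^{-1})$ on the Liu--Zhang side splits under differentiation into the two-sided factor on $W_{n-1}=\HH\otimes V_{n-1}$ governs the whole argument; the surjectivity of $\Psi$ onto (\ref{hwt}) and the $L^2$-comparison are bookkeeping by comparison.
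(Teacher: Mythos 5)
Your proposal is correct and follows essentially the same route as the paper: both use the map $f\mapsto f_X-\qj f_Y$ composed with the coordinate isomorphism $W_{n-1}\cong\HH^n$, check injectivity, identify the image via condition (\ref{hwt}), verify $G$-equivariance and the matching of $L^2$-structures, and then quote Schmid/Liu--Zhang for the identification of the $L^2$-kernel of the Dirac operator with the quaternionic discrete series of minimal $K$-type $V_0\boxtimes V_n$. The only cosmetic difference is that you route the final citation through \cite{LiuZhang} where the paper cites Schmid's thesis directly after using the Liu--Zhang equivalence $\scrD(F)=0\iff\calD(f)=0$ from Theorem \ref{thm1}.
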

\begin{proof}
Under the complex-linear isomorphism
\begin{align}\label{WH} \psi: W_{n-1}=\HH\otimes V_{n-1}\rar \HH^{n},\ \ \ w=\sum_{i=0}^{n-1} q_i X^{n-1-i} Y^{i}\mapsto (q_0,\cdots, q_{n-1}),\end{align}	
the action of $u\in \HH$ on $w$ is mapped to 
	$$ u\cdot (q_0,\cdots, q_{n-1}) = (uq_0,\cdots, uq_{n-1}) \cdot {}^tR_{n-1}(u).$$
Therefore the map
$$ C^\infty(B,V_n) \rar C^{\infty}(B,\HH^n),\ \ \ \psi\circ (f_X - \qj f_Y)$$ is a $G$-equivariant injection. By Theorem (\ref{thm1}, it restricts to a $G$-equivariant embedding from the kernel of the Dirac operator $\scrD$ into the space of Fueter-regular functions in $C^\infty (B,\HH^{n})$. It's easy to check that a map $h: B \rar \HH^n$ is in this image if and only if (\ref{hwt}) is satisfied. By Schmid's thesis \cite{Sch89}, the space of $L^2$-vectors in the kernel of $\scrD$ is a discrete series representation of $G$ with minimal $K$-type $V_0\boxtimes V_n$, provided $n$ is large enough. In fact one checks that $n\geq 2$ is sufficient. That the action of $G$ is unitary, and the $L^2$-norm of $C^\infty(G,V_n)$ defines the same $L^2$-subspace as ours follows from the fact that $d\mu (1-\rN q)^{-4}$ is a $G$-invariant measure on $B$. Furthermore, the $G$-action and normalization on $C^\infty(B,\HH^n)$ is compatible with that of Liu-Zhang\cite{LiuZhang} on $C^\infty(B,V_n)$.
\end{proof}

We now give a construction of the minimal $K$-type of $\scrR(B,\HH^n)$ via regular homogeneous polynomials.

For a variable $z$, and $n$ any integer, we write 
\begin{align} z^{[n]}=\left\{\begin{array}{cl}\frac{z^n}{n!}& \text{ for }n\geq 0,\\0 & 
\text{ for } n<0.\end{array}\right.\end{align}

Recall that a function $f: \HH^\times \rar V$, for a real vector space $V$, is called \textit{homogeneous of degree $n$}, if $f(rq)=r^n f(q)$ for all $r\in \RR^\times$ and $q\in \HH^\times$. For $n\geq 0$, following \cite{Sud79}, we define
\begin{align}\label{Pkl} P_{k,l}^n: \HH \rar \CC,\ \ \ P_{k,l}^n(z+ \qj w) = \sum_{r=0}^{n} (-1)^r z^{[n-k-l+r]} \ol{z}^{[r]} 
w^{[k-r]}\ol{w}^{[l-r]},
\end{align}
and put
\begin{align} Q_{kl}^n = P_{kl}^n - \qj P_{k-1,l}^n\end{align}
These functions satisfy the Cauchy-Riemann-Fueter equations
\begin{align}\label{CRFP}\pard{P_{k,l}^n}{\ol{z}} = - \pard{P^n_{k-1,l}}{\ol{w}},\ \ \  \pard{P_{k,l}^n}{w} = \pard{P^n_{k-1,l}}{z},\end{align}
and the set 
$$ \{Q_{k,l}^n: 0\leq k\leq l\leq n\},$$
is a basis for regular homogeneous functions $\HH \rar \HH$ of degree $n$, as a right $\HH$-vector space. For $k=0,\cdots,n$, we define 
\begin{align} g_k^n: \HH \rar V_n,\ \ \ g_{k}^n(q) = \sum_{l=0}^n P^n_{l,k}(q) X^{[l]} Y^{[n-l]},\end{align}
and 
\begin{align}
h_{k}^n = (\partial_X - \qj \partial_Y) g_k^n.
\end{align}
The coordinate functions of $h_k^n$ are the $Q_{k,l}^n$, and the Cauchy-Riemann equations (\ref{CRFP}) amount to $\ol{\partial}_l h_k^n =0$. Then $h_k^n\in C^\infty(B,\HH^n)$ are regular homogeneous of degree $n$, and linearly independent. 
The action of $K\subset G$ preserves homogeneity, hence the complex span of $h_k^n$ in $\scrR(B,\HH^n)$ is a $K$-representation of dimension $n+1$. Since the minimal $K$-type has the same dimension, the two must coincide.

\section{Differential Forms Associated with Automorphic Forms}

\subsection{Fueter-Regular Automorphic Forms}

Given $\wt{f}\in C^\infty (B,W_n)$, of the form
$$ \wt{f} = \sum_{i=0}^{n} \phi_{i+1} X^{n-i} Y^i,$$
by the coordinate function of $\wt{f}$ we mean $\frakf=(\phi_1, \cdots, \phi_{n+1}): B \rar \HH^{n+1}$.

\begin{lemma}Let $f\in C^\infty(G,V_n)$, $\wt{f}=\partial_X - \qj \partial_Y \in C^\infty(B,W_{n-1})$, and let $\frakf: B \rar \HH^n$ be the coordinate function of $\wt{f}$. Then $f$ is invariant under a subgroup $\Gamma\subset G$ if and only if $\frakf$ satisfies
	$$ \frakf(\gamma q) = |cq+d|^2(cq+d)\cdot \frakf(q)\cdot R_{n-1}(cq+d),\ \ \ \text{ for all }\gamma = \gamma(a,b,c,d)\in \Gamma.$$
\end{lemma}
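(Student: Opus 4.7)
The plan is to apply the differential operator $\partial_X-\qj\partial_Y$ to the transformation law for $f$ that encodes $\Gamma$-invariance, and then translate the resulting identity from $W_{n-1}$-values to $\HH^n$-values via the coordinate identification $\psi:W_{n-1}\isomto\HH^n$ of (\ref{WH}).

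Concretely, by the trivialization (\ref{trivia}) together with Proposition \ref{dagprop}, $\Gamma$-invariance of $f\in C^\infty(G,V_n)$ is equivalent to the transformation rule
$$ f(\gamma q)\;=\;|cq+d|^{2}\,R_n(cq+d)\cdot f(q),\qquad \gamma=\gamma(a,b,c,d)\in\Gamma,$$
for the associated function on $B$. Since the operator $\partial_X-\qj\partial_Y$ differentiates in the auxiliary variables $X,Y$ and not in $q$, it commutes with the substitution $q\mapsto\gamma q$ in the domain, and the scalar factor $|cq+d|^2$ passes through freely. The content of the lemma therefore reduces to computing how $\partial_X-\qj\partial_Y$ intertwines the action of $R_n(cq+d)$ on $V_n$.

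The key step is to verify that the map $p\mapsto X\,\partial_X p+Y\,\partial_Y p$ from $V_n$ to $V_1\otimes V_{n-1}$ is $\GL_2(\CC)$-equivariant for the substitution action on both sides. This is a one-line chain-rule calculation using $(g\cdot p)(X,Y)=p(aX+bY,cX+dY)$. Composing with the identification $V_1\cong\HH$ of (\ref{V1H}) then gives equivariance of $\partial_X-\qj\partial_Y:V_n\to W_{n-1}$ under $\HH^\times$, where the latter acts on $W_{n-1}=\HH\otimes V_{n-1}$ diagonally: by left multiplication on the $\HH$-factor and by $R_{n-1}$ on $V_{n-1}$. Plugging $u=cq+d$ into this equivariance yields the identity
$$ \wt f(\gamma q)\;=\;|cq+d|^2\,(cq+d)\cdot \wt f(q)\cdot R_{n-1}(cq+d)$$
inside $W_{n-1}$, and applying $\psi$ (which by construction converts the above diagonal $\HH^\times$-action into $v\mapsto (cq+d)\,v\,R_{n-1}(cq+d)$ on $\HH^n$) produces the stated transformation rule for $\frakf$. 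For the converse direction, the same chain of implications runs in reverse, invoking the injectivity of $\partial_X-\qj\partial_Y:V_n\to W_{n-1}$ (an elementary coefficient comparison, in which the $\CC$- and $\qj\CC$-parts decouple) so that the relation on $\wt f$ lifts back to the corresponding relation on $f$.

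The main obstacle, and the step where bookkeeping is easiest to bungle, is threading the conventions consistently: left- versus right-multiplication in $\HH$, the substitution versus evaluation convention for $R_n$, and the precise normalization of $\psi$. The structure of the argument is short and clean once these are fixed, but some care is required to ensure that the $\HH$-factor ends up on the correct side of $\frakf(q)$ and that $R_{n-1}(cq+d)$ appears as right multiplication with the conventions of the lemma rather than as, for instance, ${}^tR_{n-1}(cq+d)$.
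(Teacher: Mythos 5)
Your proposal is correct and follows essentially the same route as the paper: both arguments reduce the lemma to the $\HH^{\times}$-equivariance of $f\mapsto f_X-\qj f_Y$ viewed as a map $V_n\to W_{n-1}=\HH\otimes V_{n-1}$ (the paper gets this by observing that $\tau=X\otimes\partial_X+Y\otimes\partial_Y$ corresponds to $\id_{V_1}$ and is hence invariant, which is the same content as your chain-rule computation) and then substitute $u=cq+d$ into the automorphy law before passing to coordinates via $\psi$. One cosmetic point: the translation of $\Gamma$-invariance into the transformation law $f(\gamma q)=|cq+d|^{2}\rho_n(cq+d)f(q)$ comes from the induced-representation action (\ref{Gaction}), not from Proposition \ref{dagprop}.
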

\begin{proof}Let $\tau = X\otimes \partial_X + Y\otimes \partial_Y \in V_1\otimes V_1^*$, and note that $\wt{f}=\Phi(\tau\otimes f)$, where $\Phi$ is the composition of $K$-equivariant maps
	$$ V_1 \otimes V_1^* \otimes V_k \overset{I\otimes P^-}\rar V_1\otimes V_{k-1} \overset{\psi\otimes I}\rar \HH \otimes V_{k-1},$$
	with $\psi: V_1 \isomto \HH$, $\psi(aX+bY)=a-\qj b$. Now $\tau$ maps to the identity $\id_{V_1}$ under the isomorphism $V_1\otimes V_1^* \rar \End(V_1)$. Since $\HH^1$ acts trivially on $\id_{V_1}\in \End(V_1)$, it does the same on $\tau\in V_1\otimes V_1^*$. Therefore, with $u\in \HH^1$, we have
	\begin{align}\label{Philem}\begin{split} \Phi(\tau\otimes \rho_{n}(u) f) &= \Phi(u\cdot (\tau\otimes f))=(\psi\otimes I) ( u \cdot (X \otimes f_X + Y \otimes f_Y)) \\
	&= (\psi\otimes I) (uX \otimes \rho_{n-1}(u) f_X + uY \otimes \rho_{n-1}(u) f_Y) \\&= u (\rho_{n-1}(u) f_X - \qj \rho_{n-1}(u) f_Y).
	\end{split}
	\end{align}
	Now with $\gamma=\gamma(a,b,c,d)\in \Gamma$, we have 
	$$|cq+d|^{2+n} \rho_{n}\left(\frac{cq+d}{|cq+d|}\right) f(q) = f(\gamma q),$$ and 
	$$ |cq+d|^{2+n}\Phi(\tau\otimes \rho_{n}\left(\frac{cq+d}{|cq+d|}\right)f(q)) = \Phi(\tau\otimes f(\gamma q)) = \wt{f}(\gamma q),$$
	therefore
	$$ \wt{f}(\gamma q) = |cq+d|^{n+2}\frac{(cq+d)}{|cq+d|}\left(\rho_{n-1}\left(\frac{cq+d}{|cq+d|}\right)f_X - \qj \rho_{n-1}\left(\frac{(cq+d)}{|cq+d|}\right) f_Y\right).$$
	Then 
	$$ \frakf(\gamma q) = |cq+d|^{n+2} \frac{(cq+d)}{|cq+d|}\frakf(q)\cdot R_{n-1}\left(\frac{cq+d}{|cq+d|}\right)=|cq+d|^2 (cq+d) \cdot \frakf(q) \cdot R_{n-1}(cq+d).$$ 
\end{proof}

Let $D$ be a definite quaternion algebra over $\QQ$, and $\calO=\calO_D \subset D$ a maximal order. For any commutative ring $R$, we put
\begin{align}\label{GD} G(R) = \left\{ g\in M_2(\calO\otimes R): g^* \mtwo{1 & \\ & -1} g = \mtwo{1 & \\ & -1}\right\}.\end{align}
We choose an embedding $D\subset \HH$ and identify $\calO\otimes \RR$ with $\HH$. Then $G(\RR)=\Sp(1,1)$ has well-defined subgroups $G(\QQ)$ and $G(\ZZ)$. For $N>0$ we define the $N$th \textit{principal congruence subgroup} of $G(\QQ)$ to be
\begin{align} \Gamma(N)=\{g\in G(\ZZ): g\equiv I\ (\mod N\calO)\} =\ker(G(\ZZ)\rar G(\ZZ/N\ZZ)).\end{align}
As usual, an \textit{arithmetic subgroup} $\Gamma\subset G(\QQ)$ is one that is commensurable with $\Gamma(N)$ for some $N$.

The above lemma suggests the following definition.

\begin{defn}Let $\Gamma \subset G(\QQ)$ be an arithmetic subgroup. A function $h: B \rar \HH^n$ is an \textit{automorphic form} on $B$ of \textit{weight $n$} and \textit{level $\Gamma$} if
	\begin{itemize}
		\item[(1)] $h(\gamma\cdot q) = |cq+d|^2(cq+d)\cdot h(q) \cdot R_{n-1}(cq+d)$, for $\gamma=\gamma(a,b,c,d)\in \Gamma$.
		\item[(2)]$|f(q)|$ has moderate growth as $|q|\rar 1$.
	\end{itemize}
It is called \textit{regular} if it is left-Fueter regular, i.e. $\ol{\partial}_l h = 0$.
\end{defn}

We have distinguished differential $1$-form
$$ dq = dt + \qi dx + \qj dy + \qk dz,$$
and a $3$-form
$$ Dq = dx \wedge dy \wedge dz -\qi dt \wedge dy \wedge dz - \qj dt \wedge dx \wedge dz - \qk dt \wedge dx \wedge dy.$$

\begin{prop}\label{formdform}For $\gamma=\gamma(a,b,c,d)\in\Sp(1,1)$, 
	\begin{itemize}
		\item[(a)] $\gamma^*(dq) = (\ol{a}+q\ol{b})^{-1} dq (cq+d)^{-1},$
		\item[(b)] $\gamma^*(d\ol{q}\wedge dq) = \ol{(cq+d)}^{-1} d\ol{q}\wedge dq (cq+d)^{-1}.$
		\item[(b)] $\gamma^*(Dq)= (\ol{a}+q\ol{b})^{-1} Dq(cq+d)^{-1}|cq+d|^{-4}.$
	\end{itemize}
\end{prop}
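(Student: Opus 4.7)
The three parts are handled sequentially, with (a) doing most of the work and (b), (c) following by algebraic manipulation.

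For (a), I would apply the product rule and the identity $d(p^{-1}) = -p^{-1}(dp)p^{-1}$ to $\gamma q = (aq+b)(cq+d)^{-1}$, obtaining
\[
d(\gamma q) = \bigl[a - (\gamma q)c\bigr]\, dq\, (cq+d)^{-1}.
\]
To identify $a - (\gamma q)c$ with $(\ol a + q\ol b)^{-1}$, I would multiply on the right by $\ol a + q\ol b$; substituting $(\gamma q)(cq+d) = aq+b$ to eliminate the term $(\gamma q)cq$, and then invoking the $\Sp(1,1)$ relations $a\ol a - b\ol b = 1$ and $c\ol a = d\ol b$, the product collapses to $1$.

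For (b), I conjugate (a). Since $d$ commutes with conjugation and conjugation reverses multiplicative order, $\gamma^*\, d\ol q = \ol{cq+d}^{-1}\, d\ol q\, (a+b\ol q)^{-1}$. Wedging with $\gamma^* dq$, the intervening scalar factor is $(a+b\ol q)^{-1}(\ol a + q\ol b)^{-1}$. The crucial observation is that $\ol a + q\ol b = \overline{a + b\ol q}$, so $(\ol a + q\ol b)(a + b\ol q) = \rN(a+b\ol q)$, which equals $\rN(cq+d) = |cq+d|^2$ by the earlier lemma showing $\jmath(\gamma,q) \in K = \HH^1\times \HH^1$. This scalar factor is therefore real, commutes through the wedge, and yields the stated formula after rearrangement.

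For (c), the cleanest approach is to use an algebraic identity that characterizes $Dq$. I would first verify $\ol{dq}\cdot Dq = 2\, dv$ by direct expansion, using $dx_\mu\wedge\widehat{dx_\mu} = (-1)^\mu dv$ where $\widehat{dx_\mu}$ is the natural-ordered wedge of the remaining three coordinate $1$-forms. Pulling this identity back by $\gamma^*$, the right side becomes $2|cq+d|^{-8}\, dv$, using that the Jacobian of $\gamma$ at $q$ is the linear map $v\mapsto AvB$ with $A = (\ol a + q\ol b)^{-1}$, $B = (cq+d)^{-1}$, whose real determinant is $\rN(A)^2\rN(B)^2 = |cq+d|^{-8}$ (via $\rN(\ol a + q\ol b) = |cq+d|^2$ from part (b)). Positing $\gamma^* Dq = P\cdot Dq\cdot Q$ and substituting into the pulled-back identity pins down the ansatz; an analogous relation such as $dq\cdot Dq = 0$ confirms it, and rewriting the result via $\rN(\ol a + q\ol b) = |cq+d|^2$ produces the form in the proposition.

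The most delicate part is (c): pulling back a $3$-form under the non-commutative Jacobian $v\mapsto AvB$ mixes all four coordinates, and the quaternionic factors cannot be freely rearranged. The key that makes (c) tractable is replacing a direct cofactor expansion by the algebraic identities among $dq$, $\ol{dq}$, $Dq$, and $dv$, which reduce the problem to clean manipulation inside $\HH$.
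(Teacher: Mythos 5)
Your part (a) is correct and supplies exactly the exercise the paper leaves to the reader: $d(\gamma q)=[a-(\gamma q)c]\,dq\,(cq+d)^{-1}$, and $(a-(\gamma q)c)(\ol a+q\ol b)=1$ follows from $(\gamma q)(cq+d)=aq+b$ together with $a\ol a-b\ol b=1$ and $c\ol a=d\ol b$. For (c) you take a genuinely different route from the paper. The paper never computes the Jacobian directly: it factors $\gamma$ as a diagonal element times a translation conjugated by inversions, via explicit matrix identities in $M_2(\HH)$, and composes Sudbery's pullback formulas $\iota^*(Dq)=-q'\,Dq\,q'$ and $\nu_{a,d}^*(Dq)=(a^{-1})'\,Dq\,d'$. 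Your reduction --- use (a) to identify $D\gamma_q$ as the linear map $v\mapsto AvB$ with $A=(\ol a+q\ol b)^{-1}$, $B=(cq+d)^{-1}$, note that $Dq$ has constant coefficients, and compute the pullback of $Dq$ under that single linear map --- is cleaner and, once completed, avoids the matrix gymnastics entirely. The determinant computation $\det_\RR(v\mapsto AvB)=\rN(A)^2\rN(B)^2=|cq+d|^{-8}$ is also right.

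The gap is in how you compute that linear pullback. You \emph{posit} $\gamma^*(Dq)=P\,Dq\,Q$ and then check consistency against the identities $d\ol q\wedge Dq=c\,dv$ and $dq\wedge Dq=0$. But $\Omega^3$ is $16$-dimensional over $\RR$, while the set $\{P\,Dq\,Q\}$ is a $7$-dimensional cone inside it; your two identities impose at most $8$ real linear conditions on $\gamma^*(Dq)$, so they cannot force it into that cone. Consistency of the ansatz with the identities determines $P$ and $Q$ \emph{if} the ansatz holds, but does not \emph{confirm} the ansatz --- a $3$-form not of the form $P\,Dq\,Q$ can satisfy both identities. Since the whole content of part (c) is precisely that the pullback stays in this cone, this is the step that must be proved, not assumed. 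The clean repair within your framework is the intrinsic characterization $\Re\bigl(\ol h\cdot Dq(u_1,u_2,u_3)\bigr)=dv(h,u_1,u_2,u_3)$: for $L(v)=AvB$ one has $\Re\bigl(\ol{(AhB)}\,w\bigr)=\rN(A)\rN(B)\,\Re\bigl(\ol h\,A^{-1}wB^{-1}\bigr)$ and $dv(Lh,Lu_1,Lu_2,Lu_3)=\rN(A)^2\rN(B)^2\,dv(h,u_1,u_2,u_3)$, which together give $L^*(Dq)=\rN(A)\rN(B)\,A\,Dq\,B$ for \emph{all} $A,B$, and hence the proposition. (Alternatively, verify this formula separately for $v\mapsto av$ and $v\mapsto vb$ and compose.)

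Two smaller points. In (b) your own computation produces the extra real factor $\rN(a+b\ol q)^{-1}=|cq+d|^{-2}$ in the middle; it does not cancel, so the result is
\begin{equation*}
\gamma^*(d\ol q\wedge dq)=|cq+d|^{-2}\,\ol{(cq+d)}^{-1}\,d\ol q\wedge dq\,(cq+d)^{-1},
\end{equation*}
not the displayed formula --- a check at $q=0$ against $\gamma=\sigma(b)$, whose differential there is the real scaling by $1-\rN b$, confirms the extra factor must be present, so do not ``rearrange'' it away. Second, the constants in your auxiliary identities depend on the sign conventions in $Dq$ (with the paper's $Dq$ as printed one gets $d\ol q\wedge Dq=2\,dv$ and $dq\wedge Dq=0$; with Sudbery's signs one gets $4\,dv$ and $-2\,dv$), so pin the convention down before relying on them.
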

\begin{proof}We prove $(c)$, and leave the first two as exercise.
	For $q\in \HH^\times$, we write
	\begin{align}\label{prime}q' = \frac{q^{-1}}{\rN q},
	\end{align}
	and note that
	$$ (q')'=q,\ \ \ (pq)' = q' p', p,q\in \HH^\times,$$
	and $(rq)'=r^{-3}q'$ for $r\in \RR^\times$.
	
	For $b\in B$, $\alpha,\delta\in \HH^1$, $\gamma\in \Sp(1,1)$, write
	$$f_\gamma(q) = \gamma \cdot q,\ \ \ g_b(q) = (q+b)(\ol{b}q+1)^{-1},\ \ \iota(q)=q^{-1},\ \ \ \nu_{a,d}(q) = aqd^{-1}.$$
	In fact $g_b = f_{\gamma}$ where $\gamma = \sigma b$, and $\nu_{a,d}=f_{\gamma}$ for $\gamma = \mtwo{a & \\ & d}.$ For the relations
	$$ \iota^*(Dq) = -q' (Dq) q',\ \ \ \nu_{a,d}(q) = (a^{-1})' Dq d',$$
	we refer to \cite{Sud79}. We first claim that
	\begin{align} g_b^*(Dq) = (1+q\ol{b})' Dq (1+\ol{b}q)' (1-\rN b)^3.\end{align}
	For $b\in B$, we have
	$$ \mtwo{1 & b \\ \ol{b} & 1}  = \mtwo{& 1 \\ 1 } \mtwo{1 & \ol{b}\\ & 1} \mtwo{& 1 \\  1&} \mtwo{1 & \\ & 1-\rN b} \mtwo{1 & b \\ & 1}.$$
	Therefore, letting $\tau_b(q) = q+ b$, 
	\begin{align*}
	g_b^*(Dq) & = \tau_b^* \circ \nu_{1,1-\rN b}^* \circ \iota^* \circ \tau_{\ol{b}}^* \circ \iota^*(Dq) = \tau_b^* \circ \nu_{1,1-\rN b}^* \circ \iota^* \circ \tau_{\ol{b}}^* (-q' (Dq) q')\\
	&= \tau_b^* \circ \nu_{1,1-\rN b}^* \circ \iota^* \circ  (-(q+\ol{b})' Dq ( q+\ol{b})') = \tau_b^* \circ \nu_{1,1-\rN b}^* (-(q^{-1}+\ol{b})' (-q')(Dq)(q') (q^{-1}+\ol{b})')\\
	& = \tau_b^*\circ \nu_{1,1-\rN b}^* ((1+q\ol{b})' (Dq) (1+\ol{b}q)') = \tau_b^* \left(1+\frac{q\ol{b}}{1-\rN b}\right)' (Dq) (1-\rN b)^{-3} \left(1+\frac{\ol{b}q}{1-\rN b}\right)'\\
	&= \left(1 + \frac{q\ol{b} + \rN b}{1-\rN b}\right)' (Dq) (1-\rN b)^{-3}\left(1+\frac{\ol{b}q+ \rN b}{1-\rN b}\right)'= (1+q\ol{b})' Dq (1+\ol{b}q)' (1-\rN b)^3,
	\end{align*}
	which proves the claim. Now using the fact that $\gamma\in \Sp(1,1)$, we write
	$$ \gamma = \mtwo{a & b \\ c & d} = \mtwo{ 1 & bd^{-1}\\ \ol{d}^{-1}\ol{b}& 1} \mtwo{a & \\ & d}.$$
	Then
	\begin{align*} \gamma^*(Dq) &= \nu_{a,d}^*\circ g_{bd^{-1}}^* (Dq) = \nu_{a,d}^*\left( (1+q\ol{d}^{-1}\ol{b})(Dq) (1+\ol{d}^{-1}\ol{b})'(1-N(bd^{-1}))^3\right)\\
	&= (1+aqd^{-1}\ol{d}^{-1} \ol{b})'(a^{-1})' (Dq) (d)' (1+\ol{d}^{-1}\ol{b} aqd^{-1})' (1-\rN (bd^{-1}))^3\\
	&= \left(\frac{\ol{a} + q\ol{b}}{\rN d}\right)' (Dq) (cq+d)' \frac{1}{\rN(d)^3} = (\ol{a} + q\ol{b})' (Dq) (cq+d)',
	\end{align*}
	where in the second-to-last equality we have used the facts $1- N(bd^{-1}) = N(d)^{-1}$ and $\ol{d}^{-1}\ol{b}a = c$, which holds since $\gamma\in \Sp(1,1)$.
\end{proof}

Let $h: B \rar \HH^n$ be an automorphic form of weight $n$ and level $\Gamma$. Let $f\in C^\infty(B,V_{n})$ be such that $f_X - \qj f_Y = h$. Then we know that $f$ satisfies
$$ f(\gamma q) = |cq+d|^2\rho_{n}(cq+d) f(q),$$
therefore by Proposition \ref{dagprop}, the function $f^{\dag}(q)=f(\ol{q})$ satisfies
$$ f^{\dag}(\gamma q) = |a+b\ol{q}|^2 \rho_{n}(a+b\ol{q}) f^{\dag}(q).$$
Writing $h^{\dag} = f^{\dag}_X - f^{\dag}_Y$, we have
$$ h^{\dag}(\gamma q) = v(\gamma,q) h^{\dag}(q) \cdot  R_{n-1}(v),$$
therefore $h^*(q) = {}^t \ol{h(\ol{q})}={}^t \ol{h}^{\dag}(q)$ satisfies
$$ h^*(\gamma q) = R_{n-1}(a+b\ol{q})^* h^*(q) (\ol{a} + q\ol{b})|a+b\ol{q}|^2.$$

\begin{prop}\label{formal} Let $f, g: B \rar \HH^{n}$, be automorphic forms of level $\Gamma$ and weight $n$. To $(f,g)$ we associate the differential forms 
\begin{align*}
\eta_{f,g} =  \frac{g^* dq f}{(1-\rN q)^2},\ \ \ \ 
\theta_{f,g} = \frac{{}^t \ol{g} d\ol{q} \wedge dq f}{1-\rN q},\ \ \ \  \omega_{f,g} = g^* Dq f.
\end{align*}
For $\gamma = \gamma(a,b,c,d)\in \Gamma$, we have
$$\gamma^* \theta_{f,g} = R_{n-1}(cq+d)^* \cdot \xi \cdot R_{n-1}(cq+d),$$
and if $\xi=\eta_{f,g}$ or $\omega_{f,g}$, we have
	$$\gamma^* \xi = R_{n-1}(a+b\ol{q})^* \cdot \xi \cdot R_{n-1}(cq+d).$$ If $f$ and $g$ are Fueter-regular, $\omega_{f,g}$ is closed. 
\end{prop}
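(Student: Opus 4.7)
The argument splits into verifying the three cocycle transformations and, separately, showing $d\omega_{f,g}=0$; the three transformations follow a single sandwich pattern, while closedness is a Leibniz calculation that reduces to the Fueter equations for $f$ and $g^{*}$. I would begin by recording the automorphy laws of the building blocks. The definition of weight~$n$ automorphic form gives
$$f(\gamma q)=|cq+d|^{2}(cq+d)\,f(q)\,R_{n-1}(cq+d),$$
and the discussion preceding the statement already extracts
$$g^{*}(\gamma q)=R_{n-1}(a+b\ol{q})^{*}\,g^{*}(q)\,(\ol{a}+q\ol{b})\,|cq+d|^{2}.$$
Conjugating and transposing the automorphy law for $g$ itself, with no change of variable $q\mapsto\ol{q}$, yields
$${}^{t}\ol{g}(\gamma q)=R_{n-1}(cq+d)^{*}\,{}^{t}\ol{g}(q)\,\overline{cq+d}\,|cq+d|^{2},$$
which is exactly why $\theta_{f,g}$ carries $R_{n-1}(cq+d)^{*}$ on the left while $\eta_{f,g}$ and $\omega_{f,g}$ carry $R_{n-1}(a+b\ol{q})^{*}$. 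To these I would adjoin Proposition \ref{formdform} for $\gamma^{*}(dq)$, $\gamma^{*}(d\ol{q}\wedge dq)$ and $\gamma^{*}(Dq)$, and Lemma \ref{Ngamma} for $\gamma^{*}((1-\rN q)^{-k})=|cq+d|^{2k}(1-\rN q)^{-k}$.

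The second step is a single sandwich computation performed in each of the three cases. For $\omega_{f,g}$, substituting the three laws into $g^{*}(\gamma q)\,\gamma^{*}(Dq)\,f(\gamma q)$ produces
$$R_{n-1}(a+b\ol{q})^{*}\,g^{*}(q)\,\bigl[(\ol{a}+q\ol{b})|cq+d|^{2}(\ol{a}+q\ol{b})^{-1}\,Dq\,(cq+d)^{-1}|cq+d|^{-4}|cq+d|^{2}(cq+d)\bigr]\,f(q)\,R_{n-1}(cq+d),$$
and the bracket collapses to $Dq$: the quaternionic factors at its two outer ends cancel their inverses, and the three real scalars combine to $|cq+d|^{2-4+2}=1$. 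The formulas for $\eta_{f,g}$ and $\theta_{f,g}$ come out by the same template: the outer quaternionic factors are precisely those annihilated by the inverse factors in Proposition \ref{formdform}(a)--(b), and any leftover real scalar $|cq+d|^{2\bullet}$ is absorbed by $\gamma^{*}((1-\rN q)^{-k})$ for the appropriate~$k$.

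For closedness, the Leibniz rule together with $d(Dq)=0$ gives
$$d(g^{*}Dq\,f)=dg^{*}\wedge Dq\cdot f-g^{*}\cdot Dq\wedge df,$$
the sign $(-1)^{3}$ appearing because $Dq$ is a $3$-form. A direct expansion in the basis $dt,dx,dy,dz$ yields the identities
$$dh\wedge Dq=(\ol{\partial}_{r}h)\,dv,\qquad Dq\wedge dh=-(\ol{\partial}_{l}h)\,dv,$$
for any $\HH$-valued function $h$, where $dv=dt\wedge dx\wedge dy\wedge dz$ and $\ol{\partial}_{r}h=h_{t}+h_{x}\qi+h_{y}\qj+h_{z}\qk$ is the right Fueter operator. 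Applied entrywise this produces $d\omega_{f,g}=(\ol{\partial}_{r}g^{*})\,f\,dv+g^{*}\,(\ol{\partial}_{l}f)\,dv$. The second term vanishes by the Fueter regularity of $f$. For the first, the key observation is that if $h$ is left-Fueter regular then $\overline{h(\ol{q})}$ is right-Fueter regular: the substitution $q\mapsto\ol{q}$ flips the signs of $\partial_{x},\partial_{y},\partial_{z}$, quaternionic conjugation flips the signs of $\qi,\qj,\qk$, and the inversion of multiplicative order in conjugation of products converts the left equation into the right one. Applied entrywise this gives $\ol{\partial}_{r}g^{*}=0$, and hence $d\omega_{f,g}=0$.

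The main obstacle throughout is the non-commutativity of $\HH$: every cancellation in the sandwich step depends on respecting left-to-right order of multiplication, and the signs in the two Fueter identities must be reconciled against the sign convention fixed for $Dq$ in the paper. Once those signs are pinned down, both halves of the proof are essentially formal.
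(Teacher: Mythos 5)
Your proposal follows essentially the same route as the paper's proof: the transformation laws are obtained by combining Proposition \ref{formdform}, the automorphy law for $g^*$ derived in the discussion preceding the statement, and the identity $|cq+d|^{-2}=(1-\rN(\gamma q))/(1-\rN q)$ from Lemma \ref{Ngamma}, while closedness follows from the Leibniz identity $d(g^*Dq\,f)$ expanding into the right and left Fueter operators applied to $g^*$ and $f$. The only cosmetic differences are that you carry out the sandwich cancellation for $\omega_{f,g}$ more explicitly than the paper does, and you re-derive the fact that left-regularity of $g$ gives right-regularity of $g^*$ rather than citing Lemma \ref{reglem}.
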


\begin{proof}The transformation rules follow from  Proposition \ref{formdform}, and the preceding discussion, plus the fact that
	$$ \frac{1}{|cq+d|^2} = \frac{1-\rN(\gamma q)}{1-\rN q}.$$ If $g$ is (left-)regular, $g^*$ is right-regular (See Lemma \ref{reglem}). The last statement only depends on the regularity of $f$ and $g$. If $f_i$, $g_j: \HH \rar \HH$ are the individual coordinates, then $g_j^* (Dq) f_i$ are the coordinates of $\omega_{f,g}$, and
$$ d(g_j^* Dq f_i) = ((\ol{\partial}_r g_j^*)f_i + g_j^*(\ol{\partial}_l f_i) )(dt \wedge dx \wedge dy \wedge dz) = 0,$$
hence $d(\omega_{f,g})=0$.
\end{proof}

Note that each form in the proposition takes values in an $n\times n$ matrix of single-valued differential forms, with entries $a_{ij}=g_j^* \xi f_i$. It's easy to see that $\xi$ is zero if and only if either $f=0$ or $g=0$.

\subsection{A Representation of $\GL_2(\HH)$}

Differential forms that transform as in Proposition \ref{formdform} should correspond to sections of vector bundles on $\Gamma\backslash B$ associated with representations of $\Gamma$. Here we construct the representation that we expect should correspond to $\theta_{f,g}$.

Let $L(X,Y)$ be a column vector with entries $(X^n, X^{n-1}Y,\cdots, Y^n)$, and recall that 
$$ R_{n}: \GL_2(\CC) \rar \GL_{n+1}(\CC),$$ 
is defined by the relation
\begin{align} L(aX+bY, cX + dY) = R_{n}(g)\cdot L(X,Y),\ \ \ g=g(a,b,c,d)\in \GL_2(\CC).\end{align}
If $P=(c_0,\cdots, c_n)$ is the column vector of coefficients of $f(X,Y)\in \CC[X,Y]_n$, then $f(X,Y)={}^tP\cdot L(X,Y)$, and $g\cdot f(X,Y) = {}^tP \cdot L(aX+bY,cX+dY) = {}^tP\cdot R_n(g)\cdot L(X,Y) = ({}^t R_n(g)\cdot P) \cdot L(X,Y)$. Therefore ${}^tR_n$ is the matrix of the symmetric $n$th power representation of $\GL_2(\CC)$ on $\CC[X,Y]_n$ , with respect to the entries of $L(X,Y)$ as basis.
For $u\in\HH^\times$, we write $R_n(u)$ for $R_n(\iota(u))$. We choose a hermitian matrix $J_n$ such that 
\begin{align}\label{RJn} {}^t R_{n}(u) J_n \ol{R_{n}(u)}=J_n,\ \ u\in \HH^1.\end{align}

We consider $\HH$ as a vector space via right-multiplication by $\CC\subset \HH$, and identify $\HH^2$ with $\HH\otimes \CC^2$. The elements of $S^n(\HH^2)$ are linear combinations of tensors of the form
$$ w = \bfw_{1}\odot \cdots \odot \bfw_{n},\ \ \ \bfw_{i}=\mtwo{x_{i}\\ y_{i}}\in \HH^2.$$
The algebras $M_2(\CC)$ and $\HH$ acts $\CC$-linearly on $S^n(\HH^1)$, on the left and right respectively, with
$$ g\cdot w \cdot u = (g\bfw_1 u)\odot \cdots \odot (g\bfw_n u),\ \ g\in M_2(\CC),\ u\in\HH^\times.$$
The isomorphism $\CC^2\otimes \HH = \HH^2$ induces an embedding
$$ S^n(\CC^2)\otimes S^n(\HH) \rar S^n(\HH^2)$$
which is equivariant with respect to action of the $\RR$-algebra $M_2(\CC)\otimes_{\RR} \HH^\op$. 

\begin{prop}The $\RR$-linear action of the algebra $M_2(\HH)$, by left-multiplication on $\HH^2$, induces an action on $S^n(\HH^2)$, which preserves the image of $S^n(\CC^2)\otimes S^n(\HH)$. 
\end{prop}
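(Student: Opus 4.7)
The plan is to exploit the $\RR$-algebra isomorphism $M_2(\HH)\cong M_2(\RR)\otimes_\RR\HH$, under which $M_2(\HH)$ is generated as an $\RR$-algebra by two subalgebras that individually preserve the tensor decomposition $\HH^2\cong \HH\otimes_\CC\CC^2$: the real matrices $M_2(\RR)\subset M_2(\CC)\subset M_2(\HH)$, acting only on the $\CC^2$-factor via $M_2(\RR)\hookrightarrow \End_\CC(\CC^2)$, and the scalar quaternionic matrices $\HH\cdot I_2\subset M_2(\HH)$, acting only on the $\HH$-factor by left multiplication. First I would make this decomposition explicit and check that under the identification $\HH^2=\HH\otimes_\CC\CC^2$, the elementary tensor $M\otimes q\in M_2(\RR)\otimes_\RR\HH$ acts on a pure tensor $u\otimes v$ by $(M\otimes q)(u\otimes v)=qu\otimes Mv$.

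I would then verify that each generator preserves the image $W:=\operatorname{image}(S^n(\CC^2)\otimes S^n(\HH)\to S^n(\HH^2))$. Since $W$ is spanned by pure $n$-th symmetric powers $(u\otimes v)^{\odot n}$ (an immediate consequence of applying polarization in each tensor factor separately), it suffices to check preservation on these generators:
\begin{align*}
M\cdot (u\otimes v)^{\odot n}&=(u\otimes Mv)^{\odot n}=\Phi(u^{\odot n}\otimes(Mv)^{\odot n})\in W,\\
(qI_2)\cdot(u\otimes v)^{\odot n}&=(qu\otimes v)^{\odot n}=\Phi((qu)^{\odot n}\otimes v^{\odot n})\in W.
\end{align*}
The induced action on $S^n(\HH^2)$ being multiplicative in the acting operator, closure of $W$ under each generator implies closure under the multiplicative monoid they generate, and in particular under every product of the form $E_{ij}\cdot qI_2$ obtained by combining a matrix unit with a quaternionic scalar.

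The main obstacle is extending from the multiplicative monoid to the full $\RR$-algebra: a general $\gamma\in M_2(\HH)$ is an $\RR$-linear combination of such products, and the functor $S^n$ is multiplicative but not additive. The resolution I would pursue is to expand $\gamma=\sum_{i,j}E_{ij}\cdot q_{ij}$ entry-wise so that $\gamma(u\otimes v)=\sum_{i,j}q_{ij}u\otimes E_{ij}v\in \HH\otimes\CC^2$, then compute $(\gamma(u\otimes v))^{\odot n}$ via the multinomial theorem and verify term-by-term that each summand, after regrouping the $\HH$-factors and the $\CC^2$-factors separately, is of the form $\Phi(\alpha\otimes\beta)$ for some $\alpha\in S^n(\CC^2)$, $\beta\in S^n(\HH)$. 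The hardest step is arranging this expansion so that any contribution outside the Cartan component $S^n(\CC^2)\otimes S^n(\HH)$ is seen to vanish; I expect this to reduce to an antisymmetry cancellation involving the $\Lambda^2\CC^2\otimes\Lambda^2\HH$-component of $S^n(\HH^2)$.
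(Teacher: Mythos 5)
Your reduction to generators and the verification on pure powers $(u\otimes v)^{\odot n}$ is sound as far as it goes, and is close in spirit to the paper, which uses the generating set $M_2(\CC)$, right multiplication by $\HH$, and the scalar matrix $\qj I_2$ in place of your $M_2(\RR)$ and $\HH\cdot I_2$ (the paper disposes of $\qj I_2$ by a multiplicity-one argument on isotypic components, though the direct observation that $\qj$ carries pure tensors to pure tensors would do as well). The problem is the step you yourself flag as the main obstacle: it cannot be closed along the lines you propose, because there is no antisymmetry cancellation. Take $n=2$ and $\gamma=E_{11}+\qj E_{21}\in M_2(\HH)$, and apply the functorial $S^2(\gamma)$ to the pure power $(1\otimes e_1)^{\odot 2}$, the square of the column vector $(1,0)^t$. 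Since $\gamma\cdot (1,0)^t=(1,\qj)^t=1\otimes e_1+\qj\otimes e_2$ is not a pure tensor, the binomial expansion of $(1\otimes e_1+\qj\otimes e_2)^{\odot 2}$ contains the cross term $2\,(1\otimes e_1)\odot(\qj\otimes e_2)$, whose component in $\Lambda^2(\HH)\otimes\Lambda^2(\CC^2)\subset S^2(\HH^2)$ is $(1\wedge\qj)\otimes(e_1\wedge e_2)\neq 0$, and the two remaining terms are powers of pure tensors and contribute nothing to that summand. So the functorial symmetric power of a general element of $M_2(\HH)$ genuinely fails to preserve the Cartan component, and no regrouping of the multinomial expansion will change that.

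What the paper actually relies on is that its ``induced action'' is not the functorial $S^n$ applied to an arbitrary $\RR$-linear combination. Writing $\gamma=g_0+\qj g_1$ with $g_0,g_1\in M_2(\CC)$, the action of $\gamma$ is taken to be the sum of the functorial actions of $g_0$ and of $\qj g_1$ separately, i.e.\ $v\otimes u\mapsto g_0v\otimes u+\ol{g}_1\ol{v}\otimes\qj u$ on pure powers; this is exactly what the explicit matrix $\mu$ introduced immediately afterwards encodes. With that convention the proposition reduces to checking the two summands individually: each of $M_2(\CC)$ and $\qj M_2(\CC)$ sends pure tensors of $\HH^2=\CC^2\otimes\HH$ to pure tensors, so your ``main obstacle'' never arises. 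You should therefore either adopt that definition of the action, replacing your decomposition $M_2(\RR)\otimes_\RR\HH$ by $M_2(\CC)\oplus\qj M_2(\CC)$, after which your generator computations essentially finish the proof, or else record that under the purely functorial reading the statement you are trying to prove is false.
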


\begin{proof}
	Since
	$$ M_2(\HH) = M_2(\CC) + \qj M_2(\CC),$$
	the action of $M_2(\HH)$ on $S^n(\HH^2)$ is determined by the action of $M_2(\CC)\otimes_{\RR} \HH^\op$, along with left-multiplication by $\qj$. Since the latter commutes with the action of $\frakS_{m}$ on $T^m(\HH^2)$, it preserves each isotypic component of $S^n(\HH^2)$ as a representation of $\GL_2(\CC)\otimes \HH^\op$. Since $S^n(\CC^2)\otimes S^n(\HH)$ is an irreducible component of $S^n(\HH^2)$ with multiplicity one, it is stabilized by $\qj$, and hence also by $M_2(\HH)$.
\end{proof}

For applications, we require an explicit description of the action of $M_2(\HH)$ on $S^n(\CC^2)\otimes S^n(\HH^2)$.

Let $$\psi_1: S^n(\CC^2) \rar \CC^{n+1},\ \ \ \psi_2: S^n(\HH)\rar \CC^{n+1},$$
be fixed isomorphisms, $\psi_1$ corresponding to the basis $e_1^{\odot n-i}e_2^{\odot i}$, $i=0,\cdots, n$, and $\psi_2 = \psi_1 \circ \psi_0$, where $\psi_0: S^n(\HH) \rar S^n(\CC^2)$ is induced by $\HH \rar \CC^2,\ z+\qj w\mapsto ze_1 -\ol{w}e_2$. Here $\HH$ is a $\CC$-vector space under left-multipication by $\CC\subset \HH$. Considering $\psi_1$, $\psi_2$ as valued in column vectors, we put
$$ M_0: S^n(\CC^2)\otimes  S^n(\HH) \rar M_{n+1}(\CC),\ \ M_0(v\otimes u)=\psi_1(v)\cdot {}^t \psi_{2}(u).$$
Then for $g\in \GL_2(\CC)$ and $h\in \HH^1$, we have
$$ M_0(gv\otimes uh) = {}^tR_{n}(g) M_0(v\otimes u) R_n(h).$$
Define an $\RR$-linear action of $M_2(\HH)$ on $M_{n+1}(\CC)$ by

$$\mu_0 (g_0 + \qj g_1)\cdot M_0(v\otimes u) =M_0(g_0 v\otimes u) +  M_0(\ol{g}_1 \ol{v}\otimes \qj u).$$

It can be realized as matrix multiplication as follows.
$$ M: S^n(\HH)\otimes S^n(\CC^2) \rar M_{2n+2,n+1}(\CC),\ \ \ M(u\otimes v) = \mone{M_0(u\otimes v)\\ M_0(\ol{u}\otimes \qj v)}.$$ For $g=M_2(\HH)$, write $g = g_0 + \qj g_1 $, with $g_0, g_1\in M_2(\CC)$, and define
$$ \mu: M_2(\HH) \rar \GL_{2n+2}(\CC),\ \ \ \mu(g) = \mtwo{{}^tR_n(g_0)&  {}^tR_n(\ol{g}_1)\\ {}^tR_n(g_1) & {}^tR_n(\ol{g}_0)}.$$

Then
$$ \mu(g)M(v\otimes u)  = \mtwo{ M_0(g_0 v\otimes u) +  M_0(\ol{g}_1 \ol{v}\otimes \qj u)\\ M_0(g_1 v\otimes u) + M_0(\ol{g}_0 \ol{v}\otimes \qj u)} = \mu(g_0) M(v\otimes u) + \qj \cdot (\mu(g_1)M(g_1v\otimes u)).$$

For $x,y\in \HH$, let 
$$ w(x,y) = \sum_{i=0}^{n} e_1^{\odot n-i} e_2^{\odot i}\otimes x^{\odot n-i} y^{\odot i} \in S^n(\CC^2)\otimes S^n(\HH),$$
and put
$$ W(x,y) = M_0(w(x,y)) \in M_{n+1}(\CC).$$

For $u\in \HH^\times$, we have
$$  W( xu,yu) = W(x,y)R_n(u) ,$$
and for $g=g(a,b,c,d)\in M_2(\HH),$
$$ \mu(g) \cdot W(x,y) = W(ax+by,cx+dy).$$

Now for $q\in \HH$, set
\begin{align} Z(q) = W(q,1).\end{align}
If $x,y\in \HH$, $y\neq 0$, and $\gamma=\gamma(a,b,c,d)\in \GL_2(\HH)$, setting $q=xy^{-1}$, we have
\begin{align} Z(\gamma q)R_n(cq+d)= \mu(\gamma) Z(q)\end{align}
as well as
$$ R_n(cq+d)^* Z(\gamma q)^* = Z(q)^* \mu(\gamma)^*.$$ 

Suppose that $\omega$ is a quaternion-valued differential $k$-form on $B$, with values in $M_{n}(\HH)$, such that for each $\gamma\in \Gamma$, 
$$ \gamma^*(\omega) = R_n(cq+d)^* \cdot \omega \cdot R_n(cq+d).$$

Then we expect $\omega$ should correspond to a section of a vector bundle on $\Gamma\backslash B$ associated with the representation $\mu^* \otimes \Omega^k \otimes \mu$ of $\Gamma$, where $\Omega^k$ denotes the $\HH$-valued $k$-forms on $B$.

\section{Appendix: Quaternionic Analysis}

In this section we collect some facts about quaternionic analysis that we use throughout the article. The material occasionally repeats \cite{Sud79}, which is a basic reference, but is mostly supplementary.

\subsection{Derivatives and Differentials}

\subsubsection{Differentiation Rules}

It's now convenient to write a quaternion $q$ as $e_0 x_0 + e_1 x_1 + e_2 x_2 + e_3 x_3$, where $x_i\in \RR$, and $(e_0,e_1,e_2,e_3)=(1,\qi,\qj,\qk)$. If $U$ and $V$ are open subsets of Euclidean space, as usual $C^1(U,V)$ denotes continuously differentiable functions $U \rar V$. We write $f\in C^1(U,\HH)$ as 
\begin{align} f = \sum_{i=0}^3 e_i f^{(i)},\ \ \ f^{(i)}\in C^1(U,\RR)\end{align}
where $f^{(i)}$ are the coordinates of $f$ with respect to $e_0,\cdots, e_3$.

If $f$ depends on $x\in U\subset \RR$, we write $f_x = \frac{df}{dx}$. 

\begin{prop}Let $U\subset \RR$ be an open interval. For $f,g\in C^1(U,\HH)$, the differentiation rules are
	\begin{enumerate}
		\item[(a)] (Product) $(fg)_x = f_x g + fg_x$
		\item[(b)] (Inverse) $(f^{-1})_x = -f^{-1}f_x f^{-1}$ 
		\item[(c)] (Quotient) $(fg^{-1})_x = f_x g^{-1} - f g^{-1} g_x g^{-1},\ \ \ (g^{-1}f) = -g^{-1}g_xg^{-1}f + g^{-1}f_x$
	\end{enumerate}	
\end{prop}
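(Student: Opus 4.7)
The plan is to reduce everything to the standard one-variable calculus, using crucially that the parameter $x$ is real and so the scalar $h$ in the difference quotient commutes with all quaternions. Throughout I must be careful to preserve the order of multiplication on each side, because $\HH$ is noncommutative and the proposed formulas reflect this (e.g.\ $f^{-1} f_x f^{-1}$, not $f_x f^{-2}$).

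For (a), I would argue from first principles rather than breaking into the four real coordinates, which is cleaner and sets up (b). Starting from
$$(fg)(x+h)-(fg)(x) = \bigl(f(x+h)-f(x)\bigr)g(x+h) + f(x)\bigl(g(x+h)-g(x)\bigr),$$
divide by the real scalar $h$ and pass to the limit $h\to 0$. Since $|\cdot|$ is a submultiplicative norm on $\HH$, the usual estimates apply: the first term tends to $f_x(x)g(x)$ using continuity of $g$, and the second tends to $f(x)g_x(x)$. (If preferred, one can alternatively write $f=\sum e_i f^{(i)}$, $g=\sum e_j g^{(j)}$, apply the classical real product rule to each $f^{(i)}g^{(j)}$, and reassemble using that the $e_ie_j$ are constants; the key point in either proof is that the left/right placement of $f$ and $g$ is never disturbed.)

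For (b), the function $f^{-1}=\overline{f}/\rN f$ is of class $C^1$ wherever $f$ is nonvanishing, so differentiating the identity $f\cdot f^{-1}=1$ via (a) yields
$$f_x\, f^{-1} + f\,(f^{-1})_x = 0.$$
Solving for $(f^{-1})_x$ requires left-multiplication by $f^{-1}$ (this is where noncommutativity matters and is precisely why one gets $f^{-1}f_xf^{-1}$ rather than $f_x f^{-2}$):
$$(f^{-1})_x = -f^{-1} f_x f^{-1}.$$

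For (c), I apply (a) to the products $f\cdot g^{-1}$ and $g^{-1}\cdot f$, and substitute (b) for $(g^{-1})_x$. For the first,
$$(fg^{-1})_x = f_x g^{-1} + f (g^{-1})_x = f_x g^{-1} - f g^{-1} g_x g^{-1},$$
and symmetrically
$$(g^{-1}f)_x = (g^{-1})_x f + g^{-1} f_x = -g^{-1} g_x g^{-1} f + g^{-1} f_x.$$

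There is essentially no obstacle here; the only thing to guard against is silently commuting factors past one another. The proof is just the standard real-variable derivation with order of multiplication scrupulously preserved, together with the remark that the domain of validity of (b) and (c) is the open set on which $f$, respectively $g$, does not vanish.
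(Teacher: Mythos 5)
Your proof is correct and complete. The paper's own proof is only a sketch: for (a) it simply records that both sides equal $\sum_{i,j} e_i e_j \bigl(f_{x}^{(i)}g^{(j)} + f^{(i)} g_{x}^{(j)}\bigr)$ after writing $f=\sum_i e_i f^{(i)}$, $g=\sum_j e_j g^{(j)}$ and applying the real product rule coordinatewise --- which is exactly the alternative you mention parenthetically --- and it omits (b) and (c) entirely. Your primary route for (a), via the difference quotient splitting $(fg)(x+h)-(fg)(x)$ and using that the real increment $h$ commutes with all quaternions, is a legitimately different and self-contained argument; it has the small advantage of not invoking the constancy of the products $e_ie_j$ or any reassembly step, while the coordinate version has the advantage of reducing everything to the classical one-variable rule in a single line. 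Your derivations of (b) from $f\,f^{-1}=1$ (with the correct left-multiplication by $f^{-1}$, which is the only place noncommutativity bites) and of (c) by combining (a) and (b) are precisely what the paper leaves to the reader, and your remark that (b) and (c) hold on the open set where $f$, respectively $g$, is nonvanishing is a point the paper's statement glosses over.
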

\begin{proof} We only mention that 
	$(a)$ follows from the equality of both sides with
	$$ \sum_{i,j} e_i e_j (f_{x}^{(i)}g^{(j)} + f^{(i)} g_{x}^{(j)}),$$
and omit the rest.
\end{proof}

For a quaternion $q=\sum_{i} x_i e_i\in \HH$, let us for the moment write
\begin{align} [q] = \mone{x_0 \\ \vdots \\ x_3}.\end{align}

We may identify $\HH$ with $\RR^4$ via 
$\RR^4 \rar \HH,\ \ \ (x_0,\cdots, x_3)\mapsto \sum_{i} x_i e_i.$

If $U\subset \HH$ is open and $f\in C^1(U,\HH)$, considering $U$ as a subset of $\RR^4$, we may write the Jacobian of $f$ as a row-vector with quaternion-valued functions for entries
\begin{align}\label{Jacobian}Df = \left(\begin{array}{ccc}f_{x_0}& \cdots& f_{x_3}\end{array}\right).\end{align}
The usual Jacobian as a function $U \rar \RR^4$ is obtained by replacing each $f_{x_i}$ with $[f_{x_i}]$. 

For $f,g\in C^1(U,\HH)$, we have
$$ \frac{d}{dx} (f\circ g) = \sum_i e_i \frac{d}{dx}(f^{(i)} \circ g) = \sum_{i} e_i \sum_j (f^{(i)}_{x_j}\circ g)g^{(j)}_x = \sum_j (f_{x_j}\circ g) g^{(j)}_x=((Df)\circ g)\cdot [g_x],$$
where $\cdot$ on the right-hand side is matrix multiplication.

\subsubsection{Differential operators}

Now we write $q=t+\qi x + \qj y + \qk z\in \HH$. Let $U\subset \HH$ be open, and let $V$ be a finite-dimensional two-sided $\HH$-vector space. We have $\HH$-linear differential operators acting on $f\in C^1(U,V)$ on the left,
\begin{align} 
\begin{split}\partial_l f = \pard{f}{t}- \qi \pard{f}{x} -\qj \pard{f}{y} -\qk \pard{f}{z},\\ \ol{\partial}_l f =\pard{f}{t}+ \qi \pard{f}{x} +\qj \pard{f}{y} +\qk \pard{f}{z},
\end{split}
\end{align}
and their right-sided counterparts 
\begin{align}
\begin{split}
\partial_r f = \pard{f}{t} - \pard{f}{x}\qi - \pard{f}{y}\qj - \pard{f}{z}\qk,\\ \ol{\partial}_r f = \pard{f}{t} + \pard{f}{x}\qi + \pard{f}{y}\qj + \pard{f}{z}\qk.
\end{split}
\end{align}
The first two operators are right $\HH$-linear, and the last two left $\HH$-linear. We say a function $f\in C^1(U,V)$ is \textit{left-regular} if $ \ol{\partial}_l f = 0,$ and \textit{left-anti-regular} if $\partial_l f = 0$. We say it is \textit{right-regular} if $\ol{\partial}_r f = 0$, and \textit{right-anti-regular} if $\partial_r f = 0$. By a \textit{regular} or \textit{Fueter-regular} function we always mean \textit{left-regular}. Likewise an \textit{anti-regular} function is always \textit{left-anti-regular}. 

Fix a real form $V_0\subset V$ for $V$ so that $V=\HH\otimes_\RR V_0 \otimes_\RR \HH$, and define conjugation $v \mapsto \ol{v}$ on $V$ by $a\otimes v_0 \otimes b \mapsto \ol{b} \otimes v_0 \otimes \ol{a}.$ We then have involutions
$$ C^1(U,V) \rar C^1(U,V),\ \  f \mapsto \ol{f},\ \ \ \ol{f}(q) = \ol{f(q)},$$
and
$$ C^1(U,V) \rar C^1(\ol{U},V),\ \ f\mapsto f^\dag,\ \ \ f^\dag(q)=f(\ol{q}).$$

Note that $f\mapsto f^\dag$ is left and right $\HH$-linear, but $f\mapsto \ol{f}$ is only $\RR$-linear. It's easy to verify that
\begin{align}\label{oprels}
& &\ol{\partial_l f} &= \ol{\partial}_r \ol{f}, & \ol{\partial_r f} &= \ol{\partial}_l \ol{f},& &\\\label{oprels2}
& &\ol{\partial}_l f^\dag &= (\partial_l f)^\dag, & \ol{\partial}_r f^\dag 
&= {(\partial_r f)}^\dag.& &
\end{align}

We also define 
\begin{align}\label{fdag} C^1(U,V) \rar C^1(U,V),\ \ \ f \mapsto f^*,\ \ \ f^*(q) = \ol{f(\ol{q})}\end{align}
so that $f^* = \ol{f}^\dag$. 

\begin{lemma}\label{reglem}For $f\in C^1(U,V)$, we have
	$$ f\text{ is left-regular} \iff \ol{f} \text{ is right anti-regular} \iff f^\dag \text{ is left anti-regular} \iff f^* \text{ is right-regular}.$$
\end{lemma}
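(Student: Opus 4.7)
The plan is to reduce all three equivalences to the operator identities (\ref{oprels}) and (\ref{oprels2}), combined with the elementary observation that $g\mapsto \ol{g}$ and $g\mapsto g^{\dag}$ are bijections on $C^1(U,V)$, so $\ol{g}=0\iff g=0$ and $g^{\dag}=0 \iff g=0$. All four statements have the shape ``some operator in $\{\ol{\partial}_l,\partial_l,\ol{\partial}_r,\partial_r\}$ annihilates some variant of $f$ in $\{f,\ol{f},f^{\dag},f^{*}\}$,'' and the point is to shuffle the conjugations and daggers across the operator until we land on $\ol{\partial}_l f=0$.

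For the first equivalence ($f$ left-regular $\iff$ $\ol{f}$ right-anti-regular), I would substitute $\ol{f}$ for $f$ in $\ol{\partial_r f}=\ol{\partial}_l\ol{f}$ and use $\ol{\ol{f}}=f$ to obtain $\ol{\partial_r\ol{f}} = \ol{\partial}_l f$, so that $\partial_r\ol{f}=0 \iff \ol{\partial}_l f=0$. For the second ($f$ left-regular $\iff$ $f^{\dag}$ left-anti-regular), I would substitute $f^{\dag}$ for $f$ in $\ol{\partial}_l f^{\dag}=(\partial_l f)^{\dag}$ and use $f^{\dag\dag}=f$ to obtain $\ol{\partial}_l f=(\partial_l f^{\dag})^{\dag}$; since $h^{\dag}=0\iff h=0$, the two sides vanish simultaneously.

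For the last equivalence ($f$ left-regular $\iff$ $f^{*}$ right-regular), I would note $f^{*}=\ol{f}^{\dag}$ and apply the second identity of (\ref{oprels2}) to $\ol{f}$:
$$\ol{\partial}_r f^{*}=\ol{\partial}_r\ol{f}^{\dag}=(\partial_r\ol{f})^{\dag}.$$
Taking $\cdot^{\dag}$-kernels and invoking the first equivalence yields $\ol{\partial}_r f^{*}=0\iff \partial_r\ol{f}=0\iff \ol{\partial}_l f=0$.

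There is no real obstacle here; the lemma is essentially a bookkeeping exercise once (\ref{oprels}) and (\ref{oprels2}) are in hand. The only care required is to remember that conjugation reverses the order of quaternionic multiplication (hence exchanges $\partial_l\leftrightarrow \partial_r$ and $\ol{\partial}_l\leftrightarrow \ol{\partial}_r$) while dagger commutes with quaternion multiplication but swaps $\partial_l\leftrightarrow \ol{\partial}_l$ and $\partial_r\leftrightarrow\ol{\partial}_r$ (because it negates the spatial coordinates). Both facts are built into (\ref{oprels}) and (\ref{oprels2}), so there is nothing further to check.
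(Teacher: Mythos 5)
Your argument is correct and is exactly the paper's proof spelled out: the paper simply states that the lemma ``follows easily from (\ref{oprels}) and (\ref{oprels2}), and the definition (\ref{fdag})'', and your three substitutions (using $\ol{\ol{f}}=f$, $f^{\dag\dag}=f$, and $f^*=\ol{f}^{\dag}$) are the intended bookkeeping. No gaps.
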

\begin{proof}Follows easily from (\ref{oprels}) and (\ref{oprels2}), and the definition (\ref{fdag}).
\end{proof}
By applying the lemma to $f^{\dag}$, $f^*$, and $\ol{f}$ in place of $f$, we obtain similar relations that contain the same information. As a mnemonic it may be helpful to have in mind the following diagram:
\begin{center}
	\begin{tabular}{|cc|cc|}
		\hline &&& \\
		&left regular & right regular &\\ & &&\\
		\hline & & & \\
		&left anti-regular & right anti-regular&\\ &&& \\
		\hline
	\end{tabular}
\end{center}
and note that
\begin{itemize}
	\item[(1)] $f \mapsto f^*$ reflects across the vertical axis, switching ``left'' and ``right'',
	\item[(2)] $f\mapsto f^\dag$ reflects across the horizontal axis, adding or removing ``anti'',
	\item[(3)] $f\mapsto \ol{f}$ rotates by $\pi$, switching ``left'' and ``right'', and also adding or removing ``anti''.
\end{itemize}

\subsubsection{Differential forms}

For $k=0,\cdots, 4$, let $\Omega_\RR^k$ denote the space of real-valued differential forms on $\HH\simeq \RR^4$. They are generated by the forms $dx_i$, $i=0,\cdots, 3$ under the exterior product. By $\Omega^k$ we denote the two-sided $\HH$-vector space containing $\Omega_\RR^k$ such that $\dim_{\HH} \Omega^k =\dim_\RR \Omega_\RR^k$. In particular, if $\omega_0\in \Omega_{\RR}^k$ is real-valued, and $p,q\in \HH$, we have
$$ p\omega_0q = pq \omega_0 = \omega_0 pq$$
in $\Omega^k$.

The exterior product of $\eta\in \Omega^r$ and $\omega\in \Omega^s$ is defined by
$$ (\eta \wedge \omega)(v_1,\cdots, v_{r+s}) = \frac{1}{r! s!} \sgn(\sigma)\sum_{\sigma\in S_{r+s}} \eta(v_{\sigma(1)},\cdots v_{\sigma(r)}) \omega(v_{\sigma(r+1)},\cdots v_{\sigma(r+s)}).$$
It satisfies
$$ (p \omega) \wedge ( \eta q) = p (\omega \wedge \eta) q,\ \ \ (\omega p)\wedge \eta = \omega\wedge (p \eta),\ \ \ p,q\in\HH.$$
In general, $\omega \wedge \eta \neq \pm \eta \wedge \omega$, unless the values of $\omega$ and $\eta$ commute, which is the case for instance if either one is real-valued. For $\omega\in \Omega^r$ and $h\in \HH$, we have
\begin{align}\label{dxcom} \omega \wedge h dx_i = (\omega h) \wedge dx_i = (-1)^r dx_i \wedge \omega h,\ \ i=0,\cdots, 3.
\end{align}

The $1$-form that corresponds to $\id\in \Hom_{\RR}(\HH,\HH)=\Alt^1(\HH,\HH)$ is  is
\begin{align}\label{dq} dq = \sum_{i} e_i dx_i = t + \qi dx + \qj dy + \qk dz.
\end{align}

We have
\begin{align} dq \wedge dq = \qi dy \wedge dz + \qj dz \wedge dx+ \qk dx\wedge dy.\end{align}
For $a,b,x\in\HH$, 
$$ (dq\wedge dq)_{x}(a,b) = ab - ba.$$

Let $f: \HH \rar \HH$ be a smooth function, and $Df:T(\HH) \rar T(\HH)$ the derivative. For each $\omega\in\Omega^r$ the pullback $f^*(\omega)$ of $\omega$ by $f$ is defined by
\begin{align}  (f^*(\omega))_q = \omega_q \circ (Df)_q.\end{align}
The differential of $f$ is 
$$ df = f^*(dq) = \sum_{i=0}^3 \frac{\partial f}{\partial x_i} dx_i.$$

Then, using (\ref{dxcom}), we have
\begin{align}\begin{split} f^*(dq \wedge dq) &= \sum_{i,j} \pard{f}{x_i} dx_i \wedge \pard{f}{x_j} dx_j = \sum_{i,j=0}^3 \pard{f}{x_i} (dx_i \wedge dx_j) \pard{f}x_j\\
&= \sum_{i<j} (\pard{f}{x_i} \pard{f}{x_j} - \pard{f}{x_j}\pard{f}{x_i}) dx_i\wedge dx_j.\end{split}\end{align}

We have
$$d\ol{q}\wedge dq = \sum_{i<j} (\ol{e}_i e_j -\ol{e}_j e_i) dx_i \wedge dx_j.$$
If $i=0$, we have $\ol{e}_i e_j - \ol{e}_j e_i = 2e_j$. If $i,j\neq 0$, $\ol{e}_i e_j - \ol{e}_j e_i = -e_i e_j + e_j e_i = - 2e_i e_j$. Therefore
$$ \frac{1}{2} d\ol{q}\wedge dq = (\qi dt \wedge dx + \qj dt \wedge du + \qk dt \wedge dz) - (\qk dx \wedge dy + \qi dy \wedge dz -\qj dx\wedge dz).$$
It follows that 
\begin{align} d\ol{q} \wedge dq \wedge df &= (-\qk dx \wedge dy \wedge dt + \qi dy \wedge dz \wedge dt -\qj dx \wedge dz \wedge dt)\pard{f}{t}
\end{align}

There's a distinguished $3$-form on $B$ given by
$$ Dq = dx \wedge dy \wedge dz -\qi dt \wedge dy \wedge dz - \qj dt \wedge dx \wedge dz - \qk dt \wedge dx \wedge dy,$$
and the standard $4$-form
$$ \omega_0 = dt \wedge dx \wedge dy \wedge dz.$$

Suppose $V$ is an $\HH$-vector space, and $f: \HH \rar V$ a smooth map. Then $f$ is regular if and only if 
$$ Dq \wedge f = 0.$$

For functions $f,g: \HH \rar \HH$, we have an identity
\begin{align} d (g  Dq  f) = dg \wedge Dq f - g Dq \wedge df = ((\ol{\partial}_r g) f - g(\ol {\partial}_l f))\omega_0.\end{align}

Setting $g=1$, so that $dg=0$, we obtain
$$ d(Dq f) = - Dq \wedge df.$$
Then $f$ is regular in a domain $U\subset \HH$ if and only if the form $Dq f$ is \textit{closed} on $U$. More generally, if $g$ is right-regular, and $f$ left-regular, then $gDq f$ is closed.

\subsection{Integrals and Series Expansions}

Now suppose $U$ is star-shaped, so that $Dq f$ is closed on $U$ if and only if it's exact. Let $C$ be a $3$-chain contained in $U$ whose homology class $[C]\in H^3(U,\ZZ)$ vanishes. Then $f$ is regular in $U$ if and only if $Dq f=d\eta$ for some $2$-form $\eta$ on $U$, in which case
$$ \int_{C} Dq f = \int_{C} d\eta = \int_{\partial C} \eta$$

For $q\in \HH$, $r>0$, denote the open disk of radius $r$ around $q$ by
$$ B_r(q) = \{ p\in \HH: |p-q|<r\},$$
and the punctured ball of radius $r$
$$B_r(q)^\times = B_r(q)-\{q\}.$$
For $0<r_1<r_2$ let
$$ U_{r_1,r_2}(q) = \{ p\in \HH: r_1\leq |p-q|\leq r_2\},$$
denote the closed annulus of radius $(r_1,r_2)$ centered at $q$. 

Let $\nu=(n_1,n_2,n_3)\in \ZZ^3_{\geq 0}$. There exist regular homogeneous functions $P_{\nu}$ on $\HH$, of homogeneous degree $n=n_1+n_2+n_3$, and $G_{\nu}$ on $\HH^\times$ of homogeneous degree $-n-3$, with the following property \cite[Theorem 11]{Sud79}.

\begin{thm}[Laurent Series]Let $f: B_r(q_0)^\times \rar \HH$ be regular. Then:
	\begin{itemize}\item[(a)]There exist 
		\textit{unique} $a_{\nu}, b_{\nu}\in\HH$, indexed by $\nu\in \ZZ_{\geq 0}^3$, such that
		$$ f(q) = \sum_{n=0}^\infty \sum_{|\nu|=n} P_{\nu}(q-q_0)a_\nu + G_{\nu}(q-q_0) 
		b_\nu,$$
		the series converging absolutely on $B_r(q)^\times$, and uniformly on $U_{r_1,r_2}(q_0)$ with 
		$0<r_1<r_2<r$. 
		\item[(b)]The coefficients $a_{\nu}$, $b_{\nu}$ are determined by the 
		integral 
		formulas
		$$ a_{\nu} = \frac{1}{2\pi^2} \int_{C} G_{\nu}(q-q_0) Dq f(q),\ \ \ 
		b_{\nu}= \frac{1}{2\pi^2} \int_{C} P_{\nu}(q-q_0) Dq f(q),$$
		where $C\subset B_r(q_0)^\times$ is any closed $3$-chain homologous to 	$\partial B_{r_0}(q_0)$ with $r_0<r$. 
		\item[(c)]The function $f$ can be 
		extended to a regular function $B_r(q_0)\rar \HH$ if and only if $b_{\nu}=0$ 
		for all $\nu$.
	\end{itemize}
\end{thm}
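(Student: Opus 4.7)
The plan is to adapt the classical Cauchy/Laurent argument from complex analysis to the quaternionic setting, using the Cauchy integral formula for Fueter-regular functions together with a series expansion of the Cauchy kernel. First, I would invoke the Cauchy integral formula: for $f$ left-regular on a neighborhood of a compact region $R$ bounded by a positively oriented closed $3$-chain $C$,
$$f(q) = \frac{1}{2\pi^2} \int_{C} G(p-q)\, Dp\, f(p), \qquad q\text{ in the interior of }R,$$
where $G(p) = \ol{p}/|p|^4$. Applied on an annular shell $U_{r_1, r_2}(q_0) \subset B_r(q_0)^\times$ with $r_1 < |q - q_0| < r_2$ and boundary $\partial B_{r_2}(q_0) - \partial B_{r_1}(q_0)$, this decomposes $f = f_+ + f_-$, where $f_+$ comes from the outer sphere and extends regularly into $B_{r_2}(q_0)$, while $f_-$ comes from the inner sphere and is regular on $\HH \setminus \ol{B_{r_1}(q_0)}$, vanishing at infinity.

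Next, I would expand the kernel $G(p - q) = G((p-q_0) - (q - q_0))$ in the two regimes. For $|q - q_0| < |p - q_0|$,
$$G(p - q) = \sum_{\nu} P_{\nu}(q - q_0)\, G_{\nu}(p - q_0),$$
and for $|p - q_0| < |q - q_0|$,
$$G(p - q) = -\sum_{\nu} G_{\nu}(q - q_0)\, P_{\nu}(p - q_0),$$
each converging absolutely and uniformly on compacta of the indicated region. This is the heart of the argument: the homogeneous regular polynomials $P_\nu$ are, by construction, the Taylor coefficients of $G$ at the origin in its second argument, and the $G_\nu$ are obtained by iterated differentiation of $G$. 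Substituting these expansions into the two boundary integrals and interchanging sum and integral — legitimate by uniform convergence on $\partial B_{r_2}$ and $\partial B_{r_1}$ — yields
$$f(q) = \sum_\nu \bigl(P_\nu(q - q_0)\, a_\nu + G_\nu(q - q_0)\, b_\nu\bigr),$$
with the integral formulas in (b) arising directly from this substitution. The fact that the $3$-chain $C$ in (b) can be freely deformed inside $B_r(q_0)^\times$ without changing the integral is a consequence of the closedness identity $d(g\, Dq\, f) = ((\ol{\partial}_r g) f - g(\ol{\partial}_l f))\omega_0$ from the appendix, applied to $g = G_\nu$ or $g = P_\nu$ (both of which are right-regular on the relevant domain).

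For uniqueness in (a), I would establish the orthogonality relation
$$\frac{1}{2\pi^2} \int_{\partial B_s(q_0)} G_\mu(q - q_0)\, Dq\, P_\nu(q - q_0) = \delta_{\mu,\nu},$$
together with the vanishing of $\int G_\mu\, Dq\, G_\nu$ and $\int P_\mu\, Dq\, P_\nu$ around the same sphere. Pairing a hypothetical null series against $G_\mu$ or $P_\mu$ and integrating term by term then extracts each coefficient, forcing all $a_\nu$ and $b_\nu$ to vanish. Part (c) is immediate: if every $b_\nu = 0$, the series in (a) reduces to $\sum_\nu P_\nu(q - q_0)\, a_\nu$, a power series whose partial sums are regular on $B_r(q_0)$ and which, by absolute convergence on $U_{r_1, r_2}(q_0)$ for arbitrary $r_1 < r_2 < r$, converges uniformly on compacta of $B_r(q_0)$, giving a regular extension agreeing with $f$ on $B_r(q_0)^\times$.

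The main obstacle is the kernel expansion and its uniform convergence. Non-commutativity of $\HH$ rules out the naive geometric expansion $\sum p^{-n-1} q^n$; instead one expands $G(p-q)$ as a multivariable Taylor series in the three imaginary components of $q - q_0$, identifies the coefficients with the left-regular polynomials $P_\nu$, and bounds them by the standard estimate $|P_\nu(q)| \leq |q|^{|\nu|}$. Since the number of multi-indices of length $n$ grows only like $O(n^2)$, the geometric factor $(r_1/r_2)^n$ easily dominates, and uniform convergence on $U_{r_1, r_2}(q_0)$ falls out of the pointwise expansion.
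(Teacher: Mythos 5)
This theorem is not proved in the paper at all: it is imported verbatim from Sudbery, cited as \cite[Theorem 11]{Sud79}, and the surrounding text merely asserts the existence of the homogeneous regular functions $P_\nu$ and $G_\nu$ with the stated properties. Your argument is, in outline, exactly Sudbery's proof of that theorem (Cauchy--Fueter integral formula on an annular shell, expansion of the kernel $G(p-q)$ in the two regimes $|q-q_0|\lessgtr|p-q_0|$, term-by-term integration, and the orthogonality relations $\tfrac{1}{2\pi^2}\int_{\partial B_s(q_0)} G_\mu\, Dq\, P_\nu=\delta_{\mu\nu}$ for uniqueness), so it is correct as a sketch and consistent with the source the paper relies on. The one step you should not treat as automatic is the second kernel expansion: because $\HH$ is non-commutative, the identity $G(p-q)=-G(q-p)$ applied to the first expansion yields $-\sum_\nu P_\nu(p-q_0)\,G_\nu(q-q_0)$, with the factors in the \emph{wrong} order for extracting $b_\nu=\tfrac{1}{2\pi^2}\int_C P_\nu(p-q_0)\,Dp\,f(p)$ as a right coefficient of $G_\nu(q-q_0)$. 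What is actually needed, and what Sudbery proves separately, is that the degree-$n$ block of the expansion can be written with the $q$-dependent factor on the left in both regimes; your sketch states the two expansions in the correct form but should flag that the second is an independent fact about the pair $(P_\nu,G_\nu)$ rather than a formal consequence of the first. With that caveat, and granting the quantitative bounds on $|P_\nu|$ and $|G_\nu|$ that you invoke for uniform convergence, the proposal is sound.
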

\begin{cor}[Liouville's Theorem] A bounded regular function $f: \HH \rar \HH$ 
	is constant.
\end{cor}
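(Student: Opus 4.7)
The plan is to mimic the classical proof of Liouville's theorem from complex analysis, now exploiting the Laurent series expansion for regular functions that was just stated. The key ingredients are the integral formula for the coefficients $a_\nu$ and the homogeneity degrees of $G_\nu$ and $P_\nu$.

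First, since $f$ is regular on all of $\HH$, in particular on every ball $B_R(0)$, we may apply the Laurent series theorem with $q_0 = 0$ and $r$ arbitrarily large. Because $f$ has no singularity at the origin, part (c) of the theorem gives $b_\nu = 0$ for all $\nu$, so the expansion reduces to a Taylor-type series
\[
f(q) = \sum_{n=0}^\infty \sum_{|\nu|=n} P_\nu(q) \, a_\nu.
\]

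Next, I would use the integral formula
\[
a_\nu = \frac{1}{2\pi^2} \int_C G_\nu(q) \, Dq \, f(q),
\]
taking $C = \partial B_R(0)$ for an arbitrary radius $R > 0$. The bound $|f(q)| \leq M$ on $\HH$, together with the fact that $G_\nu$ is homogeneous of degree $-|\nu|-3$ (so $|G_\nu(q)| \leq C_\nu R^{-|\nu|-3}$ on the sphere of radius $R$), and the fact that the $3$-form $Dq$ yields the volume form on $S^3_R$ which has total mass of order $R^3$, combine to produce an estimate of the shape
\[
|a_\nu| \;\leq\; C'_\nu \, M \, R^{-|\nu|}.
\]
Letting $R \to \infty$ forces $a_\nu = 0$ whenever $|\nu| \geq 1$. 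Hence only the $\nu = 0$ term survives, and since $P_0 \equiv 1$, we conclude $f \equiv a_0$ is constant.

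The only step requiring care is making the estimate $|a_\nu| = O(R^{-|\nu|})$ rigorous, which amounts to controlling $|G_\nu|$ and parametrizing $Dq$ on the sphere; but both are entirely standard given the homogeneity and the fact that $Dq$ restricts to the surface volume form up to a factor of the outward normal. No subtlety arises from the noncommutativity of $\HH$: the integrand $G_\nu(q)\,Dq\,f(q)$ is written with the correct left/right placement so that its absolute value is majorized factor by factor.
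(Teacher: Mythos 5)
Your proof is correct and is essentially the paper's own argument: the paper's proof consists of the single remark that the result ``follows from the same argument as in the complex case,'' and the Cauchy-type estimate you carry out --- bounding $a_\nu$ via the integral formula over $\partial B_R(0)$ using the homogeneity degree $-|\nu|-3$ of $G_\nu$ and letting $R\to\infty$ --- is precisely that classical argument transplanted to the quaternionic setting. You have simply made explicit what the paper leaves implicit.
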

\begin{proof}This follows from the same argument as in the complex case.
\end{proof}

\bibliographystyle{alpha}
{\small \bibliography{../../headers/refdb}}

\begin{thebibliography}{GW96}

\bibitem[Fue36]{Fuet36}
Rud Fueter.
\newblock Zur {T}heorie der regul\"{a}ren {F}unktionen einer
  {Q}uaternionenvariablen.
\newblock {\em Monatsh. Math. Phys.}, 43(1):69--74, 1936.

\bibitem[GW96]{GW96}
Benedict~H. Gross and Nolan~R. Wallach.
\newblock On quaternionic discrete series representations, and their
  continuations.
\newblock {\em J. Reine Angew. Math.}, 481:73--123, 1996.

\bibitem[LZ12]{LiuZhang}
Heping Liu and Genkai Zhang.
\newblock Realization of quaternionic discrete series on the unit ball in
  {$\Bbb H^d$}.
\newblock {\em J. Funct. Anal.}, 262(7):2979--3005, 2012.

\bibitem[Sch89]{Sch89}
Wilfried Schmid.
\newblock Homogeneous complex manifolds and representations of semisimple {L}ie
  groups.
\newblock In {\em Representation theory and harmonic analysis on semisimple
  {L}ie groups}, volume~31 of {\em Math. Surveys Monogr.}, pages 223--286.
  Amer. Math. Soc., Providence, RI, 1989.
\newblock Dissertation, University of California, Berkeley, CA, 1967.

\bibitem[Sud79]{Sud79}
A.~Sudbery.
\newblock Quaternionic analysis.
\newblock {\em Math. Proc. Cambridge Philos. Soc.}, 85(2):199--224, 1979.

\end{thebibliography}

\end{document}